\pgfplotsset{compat=newest}
\pgfplotsset{plot coordinates/math parser=false}
\newlength\figureheight
\newlength\figurewidth 
\def\IR{\hbox{\rm I\kern-.2em\hbox{\rm R}}}
\newcommand{\pc}{{\cal P}_{aug}^{IPF}}
\newcommand{\pd}{{\cal P}_{aug}^{BDF}}
\newcommand{\pdr}{{\cal P}_{red}^{BDF}}
\newcommand{\pcr}{{\cal P}_{red}^{IPF}}
\newcommand{\pgen}{{\cal P}}
\newcommand{\ascp}{{\sc ssn-gmres-ipf} } 
\newcommand{\asbd}{{\sc ssn-minres-bdf} }
\newcommand{\R}{\ensuremath{\mathbb{R}}}
\newcommand{\F}{\mathcal{F}}
\newcommand{\A}{\mathcal{A}}
\newcommand{\La}{L}
\newcommand{\I}{\mathcal{I}}
\newcommand{\ltwo}{\ensuremath{L^{2}(\Omega)}}
\newcommand{\lone}{\ensuremath{L^{1}(\Omega)}}
\newcommand{\eps}{\varepsilon}
\newtheorem{teo}{Theorem}[section]
\def\minres{{\large {\sc minres}}}
\def\gmres{{\large {\sc gmres}}}
\def\norm#1{\left\|#1\right\|} % norm, any size
\newcommand{\eqdef}{\stackrel{\rm def}{=}}
\begin{document}
\title{Preconditioning PDE-constrained optimization with  $\rm L^1$-sparsity and control constraints}

\author{Margherita Porcelli\footnotemark[1], Valeria Simoncini\footnotemark[2], Martin Stoll\footnotemark[3]}
\renewcommand{\thefootnote}{\fnsymbol{footnote}}
\footnotetext[1]{Universit\`a degli Studi di Firenze, 
Dipartimento di Ingegneria Industriale, 
Viale Morgagni, 40/44, 50134 Firenze, Italy (\texttt{margherita.porcelli@unifi.it})}
\footnotetext[2]{Universit\`a di Bologna,
Dipartimento di Matematica,
Piazza di Porta S.Donato 5
40127 Bologna, Italy
(\texttt{valeria.simoncini@unibo.it})
}
\footnotetext[3]{Numerical Linear Algebra for Dynamical Systems, Max Planck Institute
for Dynamics of Complex Technical Systems,
Sandtorstr. 1,
39106 Magdeburg,
Germany,
(\texttt{stollm@mpi-magdeburg.mpg.de})
}

\renewcommand{\thefootnote}{\arabic{footnote}}
\maketitle
\begin{abstract}
PDE-constrained optimization aims at finding optimal setups for partial differential equations 
so that relevant quantities are minimized. Including sparsity promoting terms in the formulation of 
such problems results in more practically relevant computed controls but adds more challenges to the numerical solution of these problems. 
The needed $\rm L^1$-terms as well as additional inclusion of box control constraints require the 
use of semismooth Newton methods.
We propose robust preconditioners for different formulations
of the Newton's equation. With the inclusion of a line-search strategy and an inexact approach for the solution of the linear systems,
the resulting semismooth Newton's method  is feasible for practical problems.  
Our results are underpinned by a theoretical analysis of the preconditioned matrix. 
Numerical experiments illustrate the robustness of the proposed scheme.
\end{abstract}
\begin{AMS}
65F10, 65N22, 65K05, 65F50  %Secondary 76D07
\end{AMS}

\begin{keywords} 
PDE-constrained optimization, Saddle point systems, Preconditioning, Krylov subspace solver, Sparsity,
Semismooth Newton's method
\end{keywords}
\section{Introduction}
Optimization is a crucial tool across the sciences, engineering, and life sciences and is thus requiring robust mathematical tools in terms of software and algorithms \cite{NocW06}. 
Additionally, over the last decade the need for sparse solutions has become apparent and the field of compressed sensing \cite{Don06,CanW08} is a success story where mathematical tools have
conquered all fields from image processing \cite{Nak16} to neuroscience \cite{GanS12}. 

One of the typical areas where optimization and sparsity promoting formulations are key ingredients is the optimization of functions subject to partial 
differential equation (PDE) constraints. In this field one seeks an optimal control such that the state of the system satisfies certain criteria, e.g.
being close to a desired or observed quantity, while the control and state are connected via the underlying physics of the problem. This so-called state equation is typically a PDE of a certain type. 
While this is not a new problem \cite{book::IK08,book::FT2010} the penalization of the control cost via an $\rm L^1$-norm requires different methodology from the classical $\rm L^2$-control term. 
Nonsmooth Newton methods \cite{book::IK08,HIK02,book::hpuu09,U11} have become the state-of-the-art when dealing with nonsmooth terms in PDE-constrained optimization problems as
they still exhibit local superlinear convergence. In the case of a sparsity promoting $\rm L^1$-term within PDE-constrained optimization Stadler \cite{Sta09} 
considered the applicability and convergence of the nonsmooth Newton's method. Herzog and co-authors have since followed up on this with more sophisticated sparsity structures \cite{HSW11_DS,HerOW15} 
such as directional and annular sparsity. 

The core of a Newton's scheme for a PDE constrained problem is to find a new search direction by solving a large-scale linear system in structured form, the so-called
Newton's equation. For the case of problems with a sparsity promoting term, the solution of this linear algebra phase has not yet received very much attention. 
In \cite{HS10} Herzog and Sachs consider preconditioning for optimal control problems with box control constraints
and point out that including sparsity constraints yields similar structure in the linear systems.

The aim of this work is threefold. We discuss a reduced formulation of the Newton's equation whose size does not depend on the active components,
that is components which are zero or are on the boundary of the box constraints.
We theoretically and experimentally analyze two classes of preconditioners with particular emphasis on their robustness with respect 
to the discretized problem parameters. Finally we show how line-search and inexactness in the nonlinear iterations can be exploited 
to make the Newton's method reliable and efficient.

%
%is to consider different formulation of the linear systems and then to derive parameter-robust preconditioners that allow a large variability while essentially almost no increase in iteration numbers and computing time.

The remainder of the paper is structured as follows. 
We state the main model problem based on the Poisson problem in Section~\ref{sec::modprob} and 
discuss its properties.  
Section~\ref{sec::ns} introduces the semismooth Newton's method that allows the efficient solution of the nonsmooth optimization problem. 
This scheme is based on the use of active sets representing the sparsity terms and the box control constraints and are reflected in the matrix representation 
of the generalized Jacobian matrix. We also introduce the convection-diffusion problem as a potential state equation;
this results in a nonsymmetric matrix 
representing the discretized PDE operator. We then discuss two formulations of the saddle point system 
associated with the Newton step. One of them is a full-sized system of $4\times 4$ structure while the other one is in reduced $2\times2$ form. In Section~\ref{sec::solver} the construction of efficient preconditioners is discussed and provide a thorough analysis of the preconditioners proposed for both the full and the reduced system. The bounds illustrate that the Schur-complement approximations are robust with respect to varying essential system parameters. Numerical experiments given in Section~\ref{exp} illustrate the performance of the proposed iterative schemes.
% This section also gives the different formulations used by us as well as 

\subsection*{Notation}
The matrix $\Pi_{\mathcal{C}}$ is a diagonal (0,1) matrix with nonzero entries in the set of indices $\mathcal{C}$
and $P_{\mathcal{C}}$ is a rectangular matrix consisting of those rows of $\Pi_{\mathcal{C}}$ that belong to the
indices in ${\mathcal{C}}$  ($P^T_{\mathcal{C}}P_{\mathcal{C}} = \Pi_{\mathcal{C}})$.
Finally, given a sequence of vectors $\{x_k\}$, for any function $f$, we let $f_k = f(x_k)$.

\section{Model problems}
\label{sec::modprob}
The typical model problem in PDE-constrained optimization is usually stated as
\begin{align*}
%\label{OFunc1}
 \F(\rm y,\rm u)=\frac{1}{2}\norm{\rm y-\rm y_d}_{\ltwo}^{2}+\frac{\alpha}{2}\norm{\rm u}^{2}_{\ltwo}
\end{align*}
where $\rm y$ is the state and $\rm u$ the control. The term $\rm y_d$ is the so-called desired state.
The state $\rm y$ and $\rm u$ are then linked via a state equation such as the Poisson equation.
The computed controls for this problem typically are `potato shaped' and distributed in space and time.
This is an unwanted feature in many applications and one commonly
 considers the model where we seek $(\rm y,\rm u) \in H_0^1 \times L^2(\Omega)$ such that the function
\begin{align}
\ \F(\rm y,\rm u)&=\frac{1}{2}\|\rm y-\rm y_d\|^ 2_{\ltwo}+ \frac{\alpha}{2}\|\rm u\|^ 2_{\ltwo} + 
\beta\|u\|_{\lone}  \label{pb}
\end{align}
is minimized subject to the constraints 
\begin{align}
-\Delta \rm y &= \rm u + \rm f  \mbox{ in } \Omega \label{eq:lap} \\        
\rm y  &= 0  \mbox{ on } \Gamma 
\end{align}
with additional box-constraints on the control
\begin{equation}\label{box}
\rm a \le \rm u \le \rm b \mbox{ a.e. in } \Omega,
\end{equation}
with $\rm y_d, f \in \ltwo$, $\rm a,b \in \ltwo$ with $\rm a<0<b$ a.e. and $\alpha, \beta >0$. The Laplace operator in (\ref{eq:lap}) could be replaced by other elliptic operators.
For the case $\beta=0$ this PDE-constrained optimization problem has been studied in great detail (see \cite{book::FT2010,book::IK08}
and the reference mentioned therein). For these problems one typically 
writes down the first order conditions, which are then discretized, and the associated equation solved.
As an alternative, one discretizes the optimization problem first and then obtain the first order conditions;
the associated nonlinear system are then solved using many well-studied algorithms.

The case $\beta>0$ is much harder as the addition of the nonsmooth sparsity term makes 
the solution of the PDE-constrained optimization problem introduced above very different to the more standard smooth optimization problems. 
The use of semismooth Newton schemes \cite{BIK99,HIK02,U11} has proven to give an efficient algorithm as well as to 
allow for a thorough analysis of the convergence behaviour. In this paper we will not discuss the conditions needed to guarantee 
the differentiability of the involved operators but mostly refer to the corresponding literature.
Here we state the following result identifying the optimality conditions of problem (\ref{pb})-(\ref{box}).
\vskip 0.1in
\begin{teo}{\rm (see \cite[Theorem 2.3]{Sta09})}
 The solution $({\rm  \bar y, \bar u})\in H_0^1(\Omega)\times L^2(\Omega)$ of the problem (\ref{pb})-(\ref{box})
 is characterized by the existence of $({\rm \bar p, \bar \xi})\in H_0^1(\Omega)\times L^2(\Omega)$
 such that
\begin{equation}\label{kkt1}
  \begin{array}{l}
-\Delta  \rm \bar  y - \rm \bar u - \rm f = 0 \\
-\Delta \rm \bar p + \bar y - y_d =0\\
- \rm \bar p + \alpha \bar u + \bar \xi = 0 \\
\rm \bar u - \max (0, \bar u + c (\bar \xi - \beta)) - \min (0, \rm \bar u + c(\bar \xi +\beta)) \\
\quad + \max(0, \rm (\bar u - b) + c (\bar \xi -\beta)) + \min (0, \rm (\bar u -a)+c(\bar \xi + \beta)) =0
\end{array}
\end{equation}
with $c>0$.
\end{teo}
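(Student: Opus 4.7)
The plan is to derive (\ref{kkt1}) by a Lagrangian argument in which the PDE is handled with the dual variable $\bar p$, while the nonsmooth parts of the cost (the $L^1$-norm and the box constraints on $u$) are treated through convex subdifferential calculus. Since $F$ splits as a smooth strongly convex term plus a proper convex lower semicontinuous function of $u$, standard first-order optimality conditions in Hilbert space apply; the genuinely nontrivial step is to recast the resulting pointwise subdifferential inclusion as the single semismooth equation appearing as the last line of (\ref{kkt1}).

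First I would form the Lagrangian
\[
\mathcal{L}(y,u,p) = \tfrac{1}{2}\|y-y_d\|^2_{\ltwo} + \tfrac{\alpha}{2}\|u\|^2_{\ltwo} + \beta\|u\|_{\lone} + \langle p,\, -\Delta y - u - f\rangle,
\]
restricted to $u\in U_{\mathrm{ad}}=\{v\in\ltwo : a\le v\le b \text{ a.e.}\}$. Stationarity in $p$ returns the state equation, while stationarity in $y$, tested with functions in $H^1_0(\Omega)$ and using integration by parts together with the homogeneous Dirichlet condition on $\bar p$, yields the adjoint equation $-\Delta\bar p + \bar y - y_d = 0$. The variational inequality in the control direction reads
\[
0 \in \alpha \bar u - \bar p + \beta\,\partial\|\bar u\|_{\lone} + N_{U_{\mathrm{ad}}}(\bar u),
\]
and introducing $\bar\xi := \bar p - \alpha\bar u$ produces the third identity of (\ref{kkt1}) together with the pointwise inclusion $\bar\xi(x)\in \beta\,\partial|\cdot|(\bar u(x)) + N_{[a,b]}(\bar u(x))$ for almost every $x\in\Omega$.

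The main obstacle is translating this pointwise inclusion into the fourth identity of (\ref{kkt1}). I would perform a case analysis across the five regimes $\bar u(x)\in\{a\},\,(a,0),\,\{0\},\,(0,b),\,\{b\}$, using that $\partial|\cdot|(t)=\{\mathrm{sign}(t)\}$ for $t\neq 0$, $\partial|\cdot|(0)=[-1,1]$, and that $N_{[a,b]}(t)$ equals $(-\infty,0]$, $\{0\}$, $[0,\infty)$ at $t=a$, $t\in(a,b)$, $t=b$ respectively. In each regime a direct check shows that the admissible pairs $(\bar u(x),\bar\xi(x))$ are exactly those for which the four $\max/\min$ contributions in (\ref{kkt1}) collapse so as to make the equation hold, and the argument is independent of the parameter $c>0$ because the signs of the arguments of $\max$ and $\min$ are fixed within each regime. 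Assembling the five cases yields the stated semismooth equation, which is precisely the form exploited by the Newton scheme developed later in the paper.
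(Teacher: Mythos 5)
The paper does not prove this theorem: it is quoted verbatim from Stadler \cite[Theorem~2.3]{Sta09}, so there is no in-paper proof to compare against. Your plan reconstructs essentially the standard argument from that reference: Lagrangian/adjoint calculus for the smooth part, the variational inclusion $0\in\alpha\bar u-\bar p+\beta\,\partial\|\bar u\|_{\lone}+N_{U_{\mathrm{ad}}}(\bar u)$ for the nonsmooth part, and a pointwise case analysis converting the inclusion into the $\max/\min$ equation. I checked your five regimes and in each one the signs of the four arguments are indeed fixed independently of $c>0$, and the equation collapses to $0$; so the necessity direction of your plan is sound.

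The one thin spot is the word ``exactly.'' Since the theorem asserts a \emph{characterization} (the first-order condition is also sufficient, by convexity of the reduced functional), you must also prove the converse: that the pointwise equation forces $(\bar u(x),\bar\xi(x))$ into one of the five admissible configurations, including ruling out $\bar u(x)\notin[a,b]$. Your case analysis is organized by the regime of $\bar u(x)$ inside $[a,b]$, which does not by itself deliver this; the converse requires an exhaustive case analysis over the sign patterns of the four arguments of $\max/\min$ (this is the content of the corresponding lemma in \cite{Sta09}), e.g.\ showing that $\bar u(x)=0$ with $\bar\xi(x)>\beta$ makes the left-hand side strictly negative. This is routine but must be written out for the proof to be complete.
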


\vskip 0.1in

We now consider the operator  $\rm F:L^2(\Omega)\times L^2(\Omega) \rightarrow L^2(\Omega),$ representing the nonlinear function in (\ref{kkt1}), defined by
\begin{align}
\label{f1}
\rm F(\rm u,\xi) &=  \rm u - \max (0,  u + c(\xi - \beta)) - \min (0, u + c(\xi+\beta)) \\
\nonumber &\rm \quad \quad +\max(0, ( u - b) + c(\xi -\beta)) + \min (0, (u -a)+  c(\xi + \beta)).
\end{align}
Alternatively, one could use the gradient equation 
$\rm - p + \alpha  u +  \xi = 0 $
to eliminate the Lagrange multiplier $\rm \xi$ via $ \rm \xi = p - \alpha u$ and further use $c=\alpha^{-1}$ 
obtaining the complementarity equation in the variables $\rm (u,p)$
\begin{align*}
\rm \alpha u - \max (0,   p - \beta) - \min (0,  p +\beta)+ \max(0,  p -\beta  -\alpha b)  + \min (0,  p + \beta  -\alpha a)=0,
\end{align*}
see \cite{HIK02,Sta09}.

In this work we investigate the solution of the optimality system (\ref{kkt1}):
 we first discretize the problem using finite elements (see \cite{book::hpuu09} for a more detailed discussion) and then solve the corresponding 
nonlinear system in the finite dimensional space using a semismooth Newton's method \cite{HIK02,U11}. 

\section{The semismooth Newton's method for the optimality system}
\label{sec::ns}
%In the case of a non-zero Dirichlet condition we typically have the boundary contributions in the 
%block representing the PDE-constraint.
Let $n$ denote the dimension of the discretized space.
Let the matrix $L$ represent a discretization of the Laplacian 
operator (the \textit{stiffness matrix}) or, more generally,
be the discretization of a non-selfadjoint elliptic differential operator.
Let the matrix $M$ be the FEM Gram matrix, i.e., the so-called \textit{mass matrix}
and let the matrix $\bar M$ represent the discretization of the control term 
within the PDE-constraint. 
While for the Poisson control problem this is simply the mass matrix, i.e. $\bar M=M$, for boundary
control problems or different PDE constraints (see Section \ref{sec::CD}) the matrix 
$\bar M$ does not necessarily
coincide with $M$.  Finally, let $y,u,p,\mu,y_d, f, a,b$ be the discrete counterparts of the 
functions $\rm y,u,p,\xi,y_d, f,a,b$, respectively.

The optimality system (\ref{kkt1}) can be represented in the discretized space using 
the nonlinear function $\Theta : \IR^{4n}\rightarrow \R^{4n}$ defined as
\begin{equation}\label{Theta}
\Theta(x) =                     \begin{bmatrix}
                     \Theta(x)^y\\
                     \Theta(x)^u \\
                     \Theta(x)^p \\
                     \Theta(x)^{\mu}
                    \end{bmatrix} \eqdef \begin{bmatrix}
                     M y +L^T p - M y_d\\
                     \alpha M u - \bar M^T p + M\mu \\
                     Ly - \bar M u - f \\
                     M F(u,\mu)
                    \end{bmatrix}                    
\end{equation}
where $x= (y,u,p,\mu)\in \IR^{4n}$ and the 
discretized complementarity function $F:\IR^{2n}\rightarrow \IR^{2n}$ is component-wise defined by
\begin{align*}
 F( u,\mu) &=   u - \max (0,  u + c(\mu - \beta)) - \min (0, u + c(\mu +\beta)) \\
\nonumber & \quad \quad +\max(0, ( u - b) + c(\mu -\beta)) + \min (0, (u -a)+  c(\mu + \beta)).
\end{align*}
for $c>0$.

Due to the presence of the min/max functions in the complementarity function $F$, 
function $\Theta$ is {\em semismooth} \cite{book::hpuu09,U11}. The natural extension of the classical Newton's method 
is
\begin{equation}\label{ssn}
x_{k+1} = x_k -(\Theta'(x_k))^{-1} \Theta(x_k), \quad \mbox{ for } k = 0,1, \dots
\end{equation}
where $\Theta'(x_k)\in \IR^{4x \times 4n}$ is the generalized Jacobian of $\Theta$ at $x_k$ \cite{U11}.
Under suitable standard assumptions, the generalized Jacobian based Newton's method (\ref{ssn}) converges superlinearly \cite{U11}.
We employ an extension of the above method proposed in \cite{MQ95} and reported in Algorithm \ref{algo}
that takes into account both the inexact solution of the Newton's equation and 
the use of a globalization strategy based on the merit function
$$
\theta(x) = \frac 1 2 \|\Theta(x)\|_2^2.
$$
At Lines 3-4 the Newton's equation is solved with an accuracy  controlled by the forcing term $\eta_k >0$; 
Lines 6-8 consist in a line-search where the sufficient decrease is measured with respect to 
the (nonsmooth) merit function $\theta$.

Under certain conditions on the sequence $\eta_k$ in (\ref{res}), the method retains the superlinear local convergence 
of (\ref{ssn}) and gains the convergence to a solution of the nonlinear system starting  from any $x_0$ \cite{MQ95}.

\begin{algorithm}
\caption{Global Inexact Semismooth Newton's method \cite{MQ95}}\label{algo}
\begin{algorithmic}[1]
\Require Starting $x_0$ and $\tau_0$, parameters $\sigma \in (0,1]$, $\gamma\in (0,1)$, $\tau_{\theta}>0$.
%\Ensure Computed approximation of the Primal-Dual solution $(X,y,S)$.
%\State $x_0 \gets y$, $S_0 \gets S$ and $\mu_0 \gets \mu$.
\While{$\theta(x_k) > \tau_{\theta}$}
\State Choose $\eta_k\in (0,1)$.
\State Solve 
\begin{equation}\label{NEQ}
 \Theta'(x_k) \Delta x = - \Theta(x_k) + r_k,
\end{equation}
\State with
\begin{equation} \label{res}
   \|r_k\|_2 \le \eta_k \|\Theta(x_k)\|_2.
\end{equation}
\State $\rho \gets 1$; 
\While{$\theta(x_k + \rho \Delta x) - \theta(x_k) > - 2 \sigma \gamma \rho \theta(x_k)$}
\State $\rho \gets \rho/2$.
\EndWhile
\State $x_{k+1} \gets x_k + \rho \Delta x$, $k \gets k+1$. 
\EndWhile%\label{euclidendwhile}
\end{algorithmic}
\end{algorithm}

The form of $\Theta'(x)$ can be easily derived using an active-set approach as follows. 
Let us define the following sets
\begin{eqnarray}
{\A_b} & = & \{ i \ | \ c(\mu_i -\beta) + (u_i-b_i)>0 \} \nonumber \\
{\A_a} & = & \{ i \ | \ c(\mu_i + \beta) +(a_i-u_i)<0 \} \nonumber \\
{\A_0} & = & \{ i \ | \  u_i + c( \mu_i + \beta) \ge 0 \} \cup \{ i \ | \   u_i + c(\mu_i - \beta) \le 0 \}    \label{A0} \\ 
{\mathcal{I}_+} & = & \{ i \ | \   u_i + c(\mu_i - \beta) >0     \}\cup  i \ | \  c(\mu_i -\beta) + (u_i-b_i) \le 0  \} \label{Ip} \\
{\mathcal{I}_-} & = & \{ i \ | \   u_i + c(\mu_i + \beta) < 0 \} \cup  \{ i \ | \ c(\mu_i +\beta) + (u_i-a_i) \ge 0 \}. \label{Im} 
\end{eqnarray}
Note that the above five sets are disjoint and if  
$${\A} \eqdef  {\A_b} \cup {\A_a} \cup {\A_0} $$
is the set of {\em active constraints} then its complementary set of {\em inactive}
constraints is 
$${\mathcal{I}} \eqdef {\mathcal{I}_+} \cup {\mathcal{I}_-}.$$

With these definitions at hand, the complementarity function $F$ can be expressed in compact form as
\begin{equation}\label{Fdis}
 F(u,\mu) =  \Pi_{\A_0} u +\Pi_{\A_b} (u-b) + \Pi_{\A_a} (u-a)-
c(\Pi_{\mathcal{I}_+}(\mu - \beta) + \Pi_{\mathcal{I}_-}(\mu + \beta)).
\end{equation}
It follows from the complementarity conditions that 
\begin{itemize}
 \item $u_i = 0$ for $i\in {\A_0}$;
 \item $u_i = a$ for $i\in {\A_a}$ and $u_i = b$ for $i\in {\A_b}$;
 \item $\mu_i = - \beta$ for $i\in {\mathcal{I}_-}$ and $\mu_i = \beta$ for $i\in {\mathcal{I}_+}$.
\end{itemize}
From (\ref{Fdis}),  the (generalized) derivative of $F$ follows, that is
$$
F'(u,\mu) = 
\begin{bmatrix}
% 7&1
 \Pi_{\A}  & - c\Pi_{\mathcal{I}}
\end{bmatrix},
$$
together with the corresponding Jacobian matrix of $\Theta$,
$$
\Theta'(x) = 
\begin{bmatrix}
 M & 0 & \La^T & 0 \\
0 & \alpha M & -\bar M^T & M  \\
\La & - \bar M & 0 & 0 \\        
0 &  \Pi_{\A}M & 0 & -c\Pi_{\mathcal{I}} M
\end{bmatrix}.
$$
We note that $\Theta'(x)$ depends on the variable $x$ through the definition of the sets $\A$ and $\I$.

%We now want to state another model problem that actually results in a nonsymmetric matrix $L$ due to the non-selfadjointness of the underlying operator.

%%%%%%%%%%%%%%%%%%%%%%%%%%%%%%%%%%%%%%%%%%%%%%
\subsection{Other PDE constraints}
\label{sec::CD}
It is clear that the above discussion is not limited to the Poisson problem in
(\ref{eq:lap}) but can also be extended to different models. We consider the  convection-diffusion equation 
\begin{align}
\label{eq::CD}
-\eps \triangle \rm y +w\cdot \nabla y &= \rm u \textrm{ in } \Omega \\
\rm y(:,x) &= \rm f \textrm{ on }\Gamma\\
\rm y(0,:)&= \rm y_0
\end{align}
as constraint to the objective function (\ref{pb}).
The parameter $\eps$ is crucial to the convection-diffusion equation
as a decrease in its value makes the equation more convection dominated. The wind $\rm w$
is predefined. Such optimization problems have been recently analyzed in
\cite{Ree10,HeiL10,PW11,axelsson2015comparison} and we refer to these references
for the possible pitfalls regarding the discretization. We focus on a 
discretize-then-optimize scheme using a streamline upwind Petrov-Galerkin (SUPG)
approach introduced in \cite{BroH82}. Note that other schemes such as
discontinuous Galerkin methods \cite{Sun10} or local projection stabilization
\cite{PW11} may be more suitable discretizations for the optimal
control setup as they often provide the commutation between optimize first or
discretize first for the first order conditions. Nevertheless, our approach will
also work for these discretizations. We employ linear finite elements with an SUPG 
stabilization that accounts for the convective term. The discretization of the PDE-constraint is now different as we obtain 
\begin{align}
\La y-\bar M u= f
\end{align}
with $\La$ the SUPG discretization of the differential operator in \eqref{eq::CD} 
(see \cite{ElmSW14,PW11} for more details). The matrix $\bar M$ 
now includes an extra term that takes into account the SUPG correction. Entry-wise this matrix is given as
$$
(\bar M)_{ij}=\int_\Omega \phi_i\phi_j+\delta
\int_\Omega\phi_i\left({\rm w}\cdot\nabla\phi_j\right) ,
$$
where $\{\phi_j\}$ are the finite element test functions and $\delta$ is a parameter
coming from the use of SUPG \cite{BroH82,ElmSW14}. 
The resulting matrices are both unsymmetric and hence forward problems require the use of nonsymmetric iterative solvers.
{While the optimality system still remains symmetric the nonsymmetric operators have to be approximated as part of the Schur-complement approximation and require more attention than the simpler Laplacian.}

\subsection{Solving the Newton's equation ``exactly''}\label{sec::exact}
Let us assume to solve the Newton's equation (\ref{NEQ}) ``exactly'', that is
to use $\eta_k = 0$ in (\ref{res}) in Algorithm \ref{algo}.% exact arithmetic.

Given the current iterate  $x_k = (y_k,u_k,p_k,\mu_k)$ and the current
active and inactive sets  $\A_k$ and $\I_k$, a step of the semismooth Newton's method applied 
to the system $\Theta(x)=0$ with $\Theta$ in (\ref{Theta}) has the form
\begin{equation}\label{fulleqN1}
\begin{bmatrix}
 M & 0 & \La^T & 0 \\
0 & \alpha M & -\bar M^T & M  \\
\La & - \bar M & 0 & 0 \\        
0 &  \Pi_{\A_k}M & 0 & -c\Pi_{\mathcal{I}_k} M
\end{bmatrix}
\begin{bmatrix}
 \Delta y \\
\Delta u  \\
\Delta p \\        
\Delta \mu
\end{bmatrix}
 =
 -       \begin{bmatrix}
                     \Theta_k^y\\
                     \Theta_k^u \\
                     \Theta_k^p \\
                     \Theta_k^{\mu}
                    \end{bmatrix}.
\end{equation}

We note that the system \eqref{fulleqN1} is nonsymmetric, which would require the use of 
nonsymmetric iterative solvers. The problem can be symmetrized, so that 
better understood and cheaper symmetric iterative methods
than nonsymmetric ones  may be used for its solution.
More precisely, the system in
(\ref{fulleqN1}) can be symmetrized by eliminating $(\mu_k + \Delta \mu)_{\I_k}$
from the last row and obtaining 
\begin{equation}\label{eqN_4}
  \begin{bmatrix}
    M & 0  & \La^T & 0 \\
0 & \alpha M & -\bar M^T & M P^T_{\A_k}  \\
\La & - \bar M & 0 & 0 \\        
0 &  P_{\A_k} M & 0 &      
        \end{bmatrix}
        \begin{bmatrix}
         \Delta y \\
\Delta u  \\
\Delta p \\        
(\Delta \mu)_{\A_k} 
        \end{bmatrix}
=  -\begin{bmatrix}
    \Theta_k^y\\
 \Theta_k^u + \Gamma^u_k   \\
 \Theta_k^p \\        
 \Theta_k^{\mu} + \Gamma^{\mu}_k
   \end{bmatrix} ,
\end{equation}
with 
\begin{equation}\label{mubeta}
 (\mu_{k+1})_{(\mathcal{I_+})_{k+1}} = \beta \mbox{ \ and \ } (\mu_{k+1})_{(\mathcal{I_-})_{k+1}} = - \beta ,
\end{equation}
while
$$
\Gamma_k^u  = M \Pi_{\I_k}(\mu_{k+1}-\mu_{k})    \mbox{ \ and \ } \Gamma_k^{\mu} =  P_{\A_k} M F(u_k,\mu_k) .
$$

The dimension of the system (\ref{eqN_4}) is $(3n+n_{\A_k})\times (3n+n_{\A_k})$  and therefore 
depends on the size $n_{\A_k}$ of the active set. Since we expect $n_{\A_k}$ to be large if the optimal control 
is very sparse, that is when $\beta$ is large, we now derive a symmetric reduced system 
whose dimension is independent of the active-set strategy and that shares the same properties of
the system above.

First, we reduce the variable $\Delta u$
\begin{equation}\label{deltau}
\Delta u = \frac 1 {\alpha} (M^{-1}\bar M^T \Delta p - 
P_{\A_k}^T (\Delta\mu)_{\A_k} - M^{-1}(\Theta_k^u+\Gamma_k^u)), 
\end{equation}
yielding
\begin{equation*}%\label{eqN_3}
\begin{bmatrix}
M  &\La^T  & 0 \\
\La & - \frac 1 {\alpha} \bar M M^{-1}\bar M^T &  \frac 1 {\alpha} \bar M P_{\A_k}^T \\
0 & \frac 1 {\alpha} P_{\A_k} \bar M^T &  - \frac 1 {\alpha} P_{\A_k} MP_{\A_k}^T  
\end{bmatrix}
\begin{bmatrix}
\Delta y \\ \Delta p \\ (\Delta\mu)_{\A_k}
\end{bmatrix}
= -
\begin{bmatrix}
\Theta_k^y \\
\frac 1 {\alpha} \bar M M^{-1}(\Theta_k^u+\Gamma_k^u) + \Theta_k^p   \\
 \Gamma_k^{\mu} - \frac 1 {\alpha} P_{\A_k} ( \Theta_k^u +\Gamma_k^u) 
\end{bmatrix},
\end{equation*}
that is still a symmetric saddle-point system. 
Then, since $P_{\A_k} M P_{\A_k}^T$ is nonsingular, we can reduce further and
get
\begin{equation}\label{eqN_2}
\begin{bmatrix}
M  & \La^T   \\
\La & - \frac 1 {\alpha} \bar M M^{-1}\Pi_{\I_k}\bar M^T  
\end{bmatrix}
\begin{bmatrix}
\Delta y \\ \Delta p 
\end{bmatrix}
= -
\begin{bmatrix}
\Theta_k^y \\ 
\Theta_k^p+\bar M M^{-1}
\left(\Pi_{\A}  P_{\A}^T\Theta_k^{\mu}  - \frac 1 \alpha  \Pi_{\I} ( \Theta_k^u +\Gamma_k^u) \right)
%\hat g_2 + \bar M \Pi_{\A_k}M^{-1}P_{\A_k}^T \hat g_3 
\end{bmatrix}
\end{equation}
%where $\hat g_2 = \frac 1 {\alpha} \bar M M^{-1} g_2  $ and $\hat g_3 =g_4 - \frac 1 {\alpha} P_{\A_k} g_2 $,
together with
\begin{equation}\label{deltamuA}
 (\Delta \mu)_{\A_k} = P_{\A_k}M^{-1}\Pi_{\A_k}\bar M^T\Delta p
- \alpha P_{\A_k}M^{-1}  P_{\A_k}^T 
\left (\Theta_k^{\mu} + \frac 1 {\alpha} P_{\A_k} (\Theta_k^u +\Gamma_k^u) \right),
\end{equation}
and $\Delta u$ in (\ref{deltau}).
We note that the dimension of system (\ref{eqN_2}) is now $2n \times 2n$ and that the computation of $(\Delta \mu)_{\A_k} $ and $\Delta u$ only involves the inversion of
the diagonal matrix $M$. Moreover, the (2,2) matrix term does not need not be formed explicitly.

Finally, we remark that if the initial approximation is ``feasible'', that is it solves the linear equations
$\Theta^y(x_0)=\Theta^u(x_0)=\Theta^p(x_0)=0$, then the residuals $\Theta_k^y=\Theta_k^u=\Theta_k^p$ 
remain zero for all $k>0$ and therefore the expressions (\ref{fulleqN1})-(\ref{deltamuA}) simplify.

In the following sections, we refer to (\ref{eqN_4}) and (\ref{eqN_2}) as the {\em augmented}  and
{\em reduced} system, respectively and denote the corresponding systems as
$$J^{aug}_k \Delta x^{aug} = b^{aug}_k \qquad \Leftrightarrow \qquad \mbox{ eq. } (\ref{eqN_4})$$
and
$$J^{red}_k \Delta x^{red} = b^{red}_k \qquad \Leftrightarrow \qquad \mbox{ eq. } (\ref{eqN_2}) . $$

\subsection{Solving the Newton's equation ``inexactly''}\label{sec::inexact}
We derive suitable inexact conditions on the residual norm of the systems
(\ref{eqN_4}) and (\ref{eqN_2}) in order to recover the local convergence 
properties of Algorithm \ref{algo} and, at the same time, exploit the symmetry
of the linear systems.

Let us partition the residual $r_k = \Theta'_k \Delta x + \Theta_k $ of the 
linear system (\ref{fulleqN1}) as
$r_k = (r_k^y, r_k^u, r_k^p,r_k^{\mu})$ and assume that 
$(r_k^{\mu})_{\I_k} = 0$. This simplification allows the substitution (\ref{mubeta}). 

Let $\tilde r_k = (r_k^y, r_k^u, r_k^p,(r_k^{\mu})_{\A_k})$.
Then, the steps (\ref{NEQ}) and (\ref{res}) of Algorithm \ref{algo} correspond to solve the augmented system (\ref{eqN_4}) as follows
\begin{equation}\label{eqN_4_inex}
J_k^{aug} \Delta x^{aug} = b_k^{aug} + \tilde r_k \mbox{ \ with \ } \|\tilde r_k \| \le \eta_k \|\Theta(x_k)\|.
\end{equation}
Moreover, let $r_k^{red}$ be the residual in the reduced system (\ref{eqN_2}). We next show that
$\|r_k^{red}\| = \|\tilde r_k\| = \|r_k\|$, so that we can solve the
reduced Newton's equation inexactly by imposing the variable accuracy explicitly on the reduced residual,
instead of imposing it on the original residual. 
More precisely, we find $\Delta x^{red}$ with residual $r_k^{red}$ such
that
\begin{equation}\label{eqN_2_inex}
 J_k^{red} \Delta x^{red} = b_k^{red} +  r_k^{red} \mbox{ \ with \ } \|r_k^{red}\| \le \eta_k \|\Theta(x_k)\|.
\end{equation}
After that, we can  recover $\Delta u$ and $(\Delta \mu)_{\A_k}$ from (\ref{deltau}) and (\ref{deltamuA}), 
respectively.

The norm equality result is very general, as it holds for any reduced system. We thus introduce
 a more general notation.
Let us consider the block linear system
\begin{eqnarray}\label{eqn:block}
{\cal K} x = b \quad \Leftrightarrow \quad
 \begin{bmatrix}
-K & G^T \\ G & C
\end{bmatrix}
\begin{bmatrix}
x_1 \\ x_2
\end{bmatrix}
=
\begin{bmatrix}
b_1 \\ b_2
\end{bmatrix} ,
\end{eqnarray}
with $C$ nonsingular.
The Schur complement system associated with the first block is
given by $(-K-G^TC^{-1}G) x_1 = \hat b_1$, with $\hat b_1=b_1-G^TC^{-1}b_2$.  The following result holds.

\begin{proposition}
Let $r^{red}$ be the residual obtained by approximately
solving the reduced (Schur complement) system
$(-K-G^TC^{-1}G) x_1 = \hat b_1$, with $\hat b_1=b_1-G^TC^{-1}b_2$. Then the residual
$\tilde r = {\cal K} x - b$ satisfies $\|\tilde r\| = \|r^{red}\|$.
\end{proposition}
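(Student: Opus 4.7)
The plan is to make explicit the reconstruction of $x_2$ from an approximate $x_1$ and then verify that, under that reconstruction, the block residual in the second row vanishes while the first row reproduces exactly $r^{red}$.

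First I would fix notation. Let $x_1$ be an approximate solution of the Schur complement system, so that
\begin{equation*}
r^{red} = (-K - G^T C^{-1} G)\, x_1 - \hat b_1,
\qquad \hat b_1 = b_1 - G^T C^{-1} b_2 .
\end{equation*}
To form a candidate iterate for the full system I would recover $x_2$ from the second block row exactly, namely $x_2 := C^{-1}(b_2 - G x_1)$. This is the natural choice in the algorithm because the reduced system is obtained precisely by eliminating $x_2$ via the second block equation, and nonsingularity of $C$ is already assumed.

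Next I would compute the two blocks of $\tilde r = \mathcal{K} x - b$ separately. By construction of $x_2$,
\begin{equation*}
\tilde r_2 = G x_1 + C x_2 - b_2 = G x_1 + (b_2 - G x_1) - b_2 = 0 .
\end{equation*}
Substituting the same $x_2$ into the first block row gives
\begin{equation*}
\tilde r_1 = -K x_1 + G^T x_2 - b_1 = -K x_1 + G^T C^{-1}(b_2 - G x_1) - b_1 ,
\end{equation*}
which, after rearranging, equals $(-K - G^T C^{-1} G) x_1 - (b_1 - G^T C^{-1} b_2) = r^{red}$. Hence $\tilde r = (r^{red}, 0)^T$ and therefore $\|\tilde r\| = \|r^{red}\|$ in any norm that is absolute on block components (in particular the Euclidean norm used throughout the paper).

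I do not anticipate any real obstacle: the result is essentially a bookkeeping identity once one specifies that $x_2$ is reconstructed exactly from the second block equation. The only thing worth emphasising in the write-up is this reconstruction convention, since without it the claim would not be literally true (one could, for instance, use a stale $x_2$ and produce a nonzero second-block residual). I would therefore state the convention as part of the proposition's setting before performing the two-line computation above, and then specialise back to the paper's system by identifying $x_1$ with $(\Delta y, \Delta p)$ and $x_2$ with $(\Delta u, (\Delta \mu)_{\A_k})$, whose reconstructions from \eqref{deltau} and \eqref{deltamuA} are exactly of the required form.
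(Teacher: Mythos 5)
Your proof is correct and is essentially the paper's own argument: the paper packages the identical computation in the block factorization ${\cal K} = {\cal U}{\cal D}{\cal U}^T$ and reads off $\tilde r = {\cal U}\,[r^{red};0] = [r^{red};0]$, whereas you perform the equivalent direct back-substitution $x_2 = C^{-1}(b_2 - G x_1)$ and verify the two block residuals by hand. Your explicit insistence on the reconstruction convention for $x_2$ is exactly the hypothesis the paper leaves implicit in ``let $\widetilde x_2$ be defined consequently,'' so there is no gap.
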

\begin{proof}
We write
$$
 \begin{bmatrix}
-K & G^T \\ G & C
\end{bmatrix}
=
 \begin{bmatrix}
I & G^T C^{-1} \\ 0 & I
\end{bmatrix}
 \begin{bmatrix}
-K-G^TC^{-1}G & 0 \\ 0 & C
\end{bmatrix}
 \begin{bmatrix}
I & 0 \\ G^TC^{-1}  & I
\end{bmatrix} =: 
{\cal U} {\cal D} {\cal U}^T.
$$
Solving ${\cal K}x=b$ by reduction of the second block corresponds to
using the factorization above as follows. Setting $\widehat x = {\cal U}^T u$ we have
$$
{\cal U} {\cal D} {\cal U}^T x = b \,\, \Leftrightarrow \,\,
 {\cal D} \widehat x = {\cal U}^{-1} b \,\, \Leftrightarrow \,\,
(-K-G^TC^{-1}G) \widehat x_1 = ({\cal U}^{-1} b)_1, 
C \widehat x_2 = ({\cal U}^{-1} b)_2. 
$$
Hence, $x_1=\widehat x_1$ and $x_2 = \widehat x_2 + C^{-1} G x_1$. Let
$\widetilde x_1$ be an approximation to $\widehat x_1$, so that
$(-K-G^TC^{-1}G) \widetilde x_1 = ({\cal U}^{-1} b)_1 + r^{red}$ for some residual vector
$r^{red}$. Let $\widetilde x_2$
be defined consequently, so that $\widetilde x = [\widetilde x_1;  \widetilde x_2]$.
Substituting, we obtain
$$
\tilde r = {\cal U} ({\cal D} {\cal U}^T \widetilde x - {\cal U}^{-1}b) = [r^{red} ; 0],
$$
from which the result follows.
\end{proof}

This result can be applied to our  2$\times$2
reduced system after row and column permutation of the original 4$\times$4 system, so
that the variables used in the reduced system appear first.

\section{Iterative solution and preconditioning}
\label{sec::solver}
We now discuss the solution of the linear systems (\ref{eqN_4}) and (\ref{eqN_2})  presented earlier. 
For the ease of the presentation, we omit in this section the subscript $k$.

% \subsection{Iterative solution}
While direct solvers impress with performance for two-dimensional problems and moderate sized 
three-dimensional one they often run out of steam when dealing with more structured or 
general three-dimensional problems.
In this case one resorts to iterative solvers, typically Krylov subspace solvers that approximate the solution
in a Krylov subspace
$$
\mathcal{K}_{\ell}=\left\lbrace \pgen^{-1}r_0,\pgen^{-1}\mathcal{J}r_0,\left(\pgen^{-1}\mathcal{J}\right)^2r_0,
\ldots,\left(\pgen^{-1}\mathcal{J}\right)^{l-1}r_0\right\rbrace
$$
where $r_0=b-\mathcal{J}x_0$ is the initial residual. The matrix $\pgen$ is the preconditioner
that approximates $\mathcal{J}$ in some sense, and it is cheap to apply \cite{book::saad,ElmSW14,BenGolLie05}. 
In the context of PDE problems $\pgen$ is often derived from the representation 
of the inner products of the underlying function spaces \cite{MW10,GunHS,SchU12}. %has gained attention over the last years. 
The construction of appropriate preconditioners follows the strategy to approximate the leading block of the saddle point system and correspondingly the Schur complement of this matrix. 
As iterative solvers is concerned, the indefiniteness of the symmetric system calls for
the use of a short-term algorithm such as
%We now comment on the choice of the Krlyov subspace solver. For symmetric and positive definite systems the conjugate gradient method \cite{cg} should be employed. If the system is indefinite such as the saddle point problems introduced earlier \cg\ is no longer applicable and 
\minres\ \cite{minres}. To maintain its properties, the accompanying preconditioner 
should be symmetric and positive definite.
In case when the system matrix is nonsymmetric or the preconditioner is indefinite,
many competing methods that are either based on the Arnoldi process or the 
nonsymmetric Lanczos method can be applied \cite{book::saad}. 
As a rule of thumb, once a good preconditioner is constructed the choice of the nonsymmetric solver 
becomes less important. We now discuss the construction of preconditioners in more detail.
% \subsection{Preconditioning}

We consider preconditioning techniques based on the {\em active-set Schur complement} approximations proposed in \cite{pst15} and 
tailor this idea to the combination of sparsity terms and box control constraint case.  Our strategy here uses a matching technique \cite{PW10}
that allows the parameter-robust approximation of the Schur-complement while still being practically useful. This technique was already used for state-constraints \cite{SPW10}
not including sparsity terms and has previously shown to be successful for active set methods \cite{pst15}.

Taking the submatrix $\begin{bmatrix}
  M & 0 \\ 0 & \alpha M
 \end{bmatrix}
$ as the $(1,1)$ block, the active-set Schur complement of the matrix 
$J^{aug}$ in (\ref{eqN_4}) in its factorized form is defined as
\begin{eqnarray*} 
S & = & \frac{1}{\alpha}\begin{bmatrix}
\alpha \La M^{-1} \La^T + \bar M M^{-1} \bar M^T & - \bar M P_{\A}^T  \\
 - P_{\A} \bar M^T  & P_{\A} M P_{\A}^T 
\end{bmatrix} \\
&=& \frac 1 \alpha \begin{bmatrix} I & -\bar M \Pi_{\A} M^{-1}P_{\A}^T
 \\ 0 & I 
\end{bmatrix} 
 \begin{bmatrix}
{\mathbb S}  & 0 \\ 0 & 
P_{\A} M P_{\A}^T 
\end{bmatrix}  
  \begin{bmatrix} I & -\bar M \Pi_{\A} M^{-1}P_{\A}^T
 \\ 0 & I 
\end{bmatrix}^T ,
\end{eqnarray*} 
with
%\begin{equation}\label{Rk}
 %R = \begin{bmatrix} I & -\bar M \Pi_{\A} M^{-1}P_{\A}^T
 %\\ 0 & I 
%\end{bmatrix},
%\end{equation}
%and 
\begin{equation}\label{schurF1}
 {\mathbb S} =   \alpha \La M^{-1} \La^T + \bar M \, \Pi_{\mathcal{I}} \,M^{-1} \bar M^T.
\end{equation}
The matrix ${\mathbb S}$, and thus $S$,  explicitly depends on the current Newton's 
iteration  through the change in the active and inactive sets.

We consider the following {\em factorized approximation} of ${\mathbb S}$:
\begin{equation}\label{smhat}
\widehat {\mathbb S} : = (\sqrt{\alpha}\La +\bar M \,\Pi_{\mathcal{I}} ) M^{-1} 
(\sqrt{\alpha}\La +\bar M\, \Pi_{\mathcal{I}} ) ^T
\end{equation}
which extends the approximation proposed in \cite{pst15} to the case $M\neq \bar M$.
The approximation $\widehat {\mathbb S}$ yields the factorized active-set Schur complement approximation
\begin{equation} \label{schurmS}
\widehat S =  \frac 1 \alpha \begin{bmatrix} I & -\bar M \Pi_{\A} M^{-1}P_{\A}^T
 \\ 0 & I 
\end{bmatrix} 
 \begin{bmatrix}
\widehat  {\mathbb S}  & 0 \\ 0 & 
P_{\A} M P_{\A}^T 
\end{bmatrix}  
  \begin{bmatrix} I & -\bar M \Pi_{\A} M^{-1}P_{\A}^T
 \\ 0 & I 
\end{bmatrix}^T.
\end{equation}

The following results are along the lines of the analysis conducted in \cite{pst15}
considering $\bar M \neq M$ and $\bar M$ nonsymmetric. As in \cite{pst15}, we assume that $M$ is diagonal 
so that $M$ commutes with both $\Pi_{\cal I}$ and $\Pi_{\cal A}$; for alternatives we refer
the reader to the discussion in \cite[Remark 4.1]{pst15}.
\vskip 0.1in
\begin{proposition}\label{prop:hatS=SpG_gen}
Let ${\mathbb S}$ and $\widehat {\mathbb S}$ be as defined above. Then
$$
 \widehat{\mathbb S}  =  {\mathbb S} + \sqrt{\alpha}( \La M^{-1}\, \Pi_{\mathcal{I}}\,\bar M^T 
 +\bar M \,\Pi_{\mathcal{I}}\, M^{-1} \La^T) .
$$
\end{proposition}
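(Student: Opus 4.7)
The plan is to verify the identity by direct expansion of the factorized form of $\widehat{\mathbb S}$ and then recognizing the leftover terms as $\mathbb S$.

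First I would write out
\begin{equation*}
\widehat{\mathbb S} = (\sqrt{\alpha}\La + \bar M\,\Pi_{\mathcal I})\, M^{-1}\, (\sqrt{\alpha}\La^T + \Pi_{\mathcal I}^T \bar M^T),
\end{equation*}
noting that $\Pi_{\mathcal I}$ is a diagonal $(0,1)$ matrix, hence symmetric and idempotent ($\Pi_{\mathcal I}^T = \Pi_{\mathcal I}$, $\Pi_{\mathcal I}^2 = \Pi_{\mathcal I}$). Distributing the product produces four terms: an $\alpha\La M^{-1}\La^T$ term, two mixed cross terms carrying the factor $\sqrt{\alpha}$, and a pure $\bar M$ term $\bar M\,\Pi_{\mathcal I} M^{-1}\Pi_{\mathcal I}\bar M^T$.

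Next I would collapse this last term using the standing assumption that $M$ is diagonal (stated in the paragraph preceding the proposition), so that $M^{-1}$ commutes with $\Pi_{\mathcal I}$. Combined with idempotency this yields $\Pi_{\mathcal I} M^{-1}\Pi_{\mathcal I} = \Pi_{\mathcal I}^2 M^{-1} = \Pi_{\mathcal I} M^{-1}$, and therefore the fourth term becomes $\bar M\,\Pi_{\mathcal I}\,M^{-1}\bar M^T$. Together with $\alpha \La M^{-1}\La^T$ this is exactly $\mathbb S$ as defined in \eqref{schurF1}, so the two mixed cross terms remain and give the claimed correction $\sqrt{\alpha}(\La M^{-1}\Pi_{\mathcal I}\bar M^T + \bar M\,\Pi_{\mathcal I} M^{-1}\La^T)$.

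There is no real obstacle here: the argument is a one-line algebraic expansion whose only substantive ingredient is commutativity of $M^{-1}$ with $\Pi_{\mathcal I}$. The only thing to be slightly careful about is that $\bar M$ need not be symmetric (we are in the convection-diffusion setting of Section~\ref{sec::CD}), so one must keep the transposes on $\bar M$ intact and avoid accidentally identifying $\bar M$ with $\bar M^T$ when manipulating the mixed terms.
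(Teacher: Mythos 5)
Your proof is correct and follows essentially the same route as the paper: expand the factorization $\widehat{\mathbb S} = (\sqrt{\alpha}\La + \bar M\,\Pi_{\mathcal I})M^{-1}(\sqrt{\alpha}\La + \bar M\,\Pi_{\mathcal I})^T$ into four terms and identify $\alpha\La M^{-1}\La^T + \bar M\,\Pi_{\mathcal I}M^{-1}\bar M^T$ with $\mathbb S$. You are in fact slightly more careful than the paper in spelling out why $\Pi_{\mathcal I}M^{-1}\Pi_{\mathcal I} = \Pi_{\mathcal I}M^{-1}$ (diagonality of $M$ plus idempotency of $\Pi_{\mathcal I}$), which the paper leaves implicit.
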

{\it Proof.} The result follows from
\begin{eqnarray*}
\widehat{\mathbb S} & = & (\sqrt{\alpha}\La M^{-1} + \bar M \Pi_{\I} M^{-1}) 
(\sqrt{\alpha}\La + \bar M \Pi_{\I})^T \\
& = & \alpha  \La M^{-1} \La^T + \bar M \Pi_{\I} M^{-1} \bar M^T + \sqrt{\alpha}\La M^{-1}\Pi_{\I} \bar M ^T 
+ \sqrt{\alpha} \bar M \Pi_{\I} M^{-1} \La^T  \\
& = & {\mathbb S} + \sqrt{\alpha}( \La M^{-1}\, \Pi_{\mathcal{I}}\,\bar M^T 
 +\bar M \,\Pi_{\mathcal{I}}\, M^{-1} \La^T). \qquad \endproof
\end{eqnarray*}

If $\Pi_{\cal I} = 0$, that is all indices are active, then the two matrices coincide. 
In general,
$$
 I - \widehat{\mathbb S}^{-1}{\mathbb S} =
\widehat{\mathbb S}^{-1} \sqrt{\alpha}( \La M^{-1}\, \Pi_{\mathcal{I}}\,\bar M^T 
 +\bar M \,\Pi_{\mathcal{I}}\, M^{-1} \La^T) ,
$$
and the right-hand side matrix is of low rank, with a rank that is at most twice
the number of inactive indices. In other words, $\widehat{\mathbb S}^{-1}{\mathbb S}$
has at least a number of unit eigenvalues corresponding to half the
number of active indeces.

%This is expressed by the following corollary.
%\vskip 0.1in
%\begin{corollary}\label{cor:Afull}
%If all indices are active, then $ \widehat{\mathbb S}  = {\mathbb S}$.
%\end{corollary}
%\vskip 0.1in
In the unconstrained case ($\beta =0$ and no bound constraints) and for 
$\bar M = M$, the approximation in (\ref{smhat}) corresponds to 
the approximation proposed in \cite{PW10, PW11}.
%\footnote{{\color{blue} more comments on $M$diagonal?.}}

 We now derive general estimates for the
inclusion interval for the eigenvalues
of the pencil $({\mathbb S}, \widehat{\mathbb S})$, whose
extremes depend on the spectral properties of the nonsymmetric matrices $\La$ and $\bar M$ and of $M$.
\vskip 0.1in
\begin{proposition}\label{prop:upperestimate}
Let $\lambda$ be an eigenvalue of $\widehat {\mathbb S}^{-1} {\mathbb S}$.
Then it holds
$$
\frac{1}{2} \le \lambda \le \zeta^2 + (1 + \zeta)^2 ,
$$
with
$$
\zeta = \|M^{\frac 1 2} \left (\sqrt{\alpha} \La  + \bar M \Pi_{\I} \right )^{-1} \sqrt{\alpha} \La M^{-\frac 1 2} \|.
$$
Moreover, if $\La\bar M^T+\bar M\La^T\succ 0$, then
for $\alpha\to 0$, $\zeta$ is bounded by a constant independent of $\alpha$.
\end{proposition}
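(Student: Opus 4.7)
The plan is to exploit the factorizations
\[
\mathbb S = AA^T + BB^T, \qquad \widehat{\mathbb S} = (A+B)(A+B)^T,
\]
with $A := \sqrt{\alpha}\,L M^{-1/2}$ and $B := \bar M \Pi_{\mathcal I} M^{-1/2}$; these hold because $\Pi_{\mathcal I}^2 = \Pi_{\mathcal I}$ and $\Pi_{\mathcal I}$ commutes with the diagonal factor $M^{1/2}$, so that $BB^T = \bar M \Pi_{\mathcal I} M^{-1}\bar M^T$ matches the second summand of $\mathbb S$. Every generalized eigenvalue $\lambda$ of $(\mathbb S,\widehat{\mathbb S})$ then admits the Rayleigh representation $\lambda = (\|a\|^2 + \|b\|^2)/\|c\|^2$, where $a := A^T x$, $b := B^T x$, and $c := a + b = (A+B)^T x$ for an eigenvector $x$.

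The lower bound $\lambda \ge 1/2$ is immediate from the parallelogram-type inequality $\|a+b\|^2 \le 2(\|a\|^2+\|b\|^2)$. For the upper bound, I write $a = A^T(A+B)^{-T}c$; the spectral norm of this operator equals $\|(A+B)^{-1}A\| = \zeta$, giving $\|a\| \le \zeta\,\|c\|$. Since $b = c-a$, this yields $\|b\| \le (1+\zeta)\|c\|$, and combining the squared bounds delivers $\lambda \le \zeta^2+(1+\zeta)^2$.

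For the moreover part, I factor $\sqrt{\alpha}L + \bar M \Pi_{\mathcal I} = L(\sqrt{\alpha}I + T)$ with $T := L^{-1}\bar M\Pi_{\mathcal I}$, recasting
\[
\zeta = \sqrt{\alpha}\,\bigl\|M^{1/2}(\sqrt{\alpha}I + T)^{-1}M^{-1/2}\bigr\|.
\]
The $M^{1/2}$-conjugation only contributes an $\alpha$-independent factor (both $M^{1/2}$ and $M^{-1/2}$ commute with $\Pi_{\mathcal I}$, so the conjugated resolvent is similar to $(\sqrt{\alpha}I+M^{1/2}TM^{-1/2})^{-1}$, which shares the Jordan structure of $T$), so it suffices to show that $\|\sqrt{\alpha}(\sqrt{\alpha}I+T)^{-1}\|$ remains bounded as $\alpha \to 0^+$. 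The main obstacle is the singularity of $T$ (its kernel contains $\mathrm{range}(\Pi_{\mathcal A})$, and for general $\bar M$ possibly more): by a standard resolvent argument this boundedness holds precisely when $0$ is a \emph{semisimple} eigenvalue of $T$, since for each nonzero eigenvalue $\mu$ the factor $|\sqrt{\alpha}/(\sqrt{\alpha}+\mu)|$ is uniformly bounded in $\alpha$, while a Jordan chain at $0$ of length $\ge 2$ would produce a $1/\sqrt{\alpha}$ blow-up. To certify this semisimplicity, I conjugate the hypothesis $L\bar M^T + \bar M L^T \succ 0$ by $L^{-1}$ and $L^{-T}$ to get $L^{-1}\bar M + \bar M^T L^{-T} \succ 0$, so that the restricted matrix $P_{\mathcal I}L^{-1}\bar M P_{\mathcal I}^T$ has positive-definite symmetric part and is therefore invertible. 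The identity $\ker T^2 = \ker T$ then follows directly (under $\bar M$ non-singular): $T^2 x = 0$ forces $\Pi_{\mathcal I}L^{-1}\bar M \Pi_{\mathcal I}x = 0$, and invertibility of the restriction compels $\Pi_{\mathcal I} x = 0$, whence $Tx = 0$. This yields the desired $\alpha$-independent bound on $\zeta$.
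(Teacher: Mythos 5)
Your proof is correct and follows essentially the same route as the paper's: the same factorizations $\mathbb S = AA^T+BB^T$ and $\widehat{\mathbb S}=(A+B)(A+B)^T$, the same parallelogram inequality for the lower bound, the same triangle-inequality split $\|a\|\le\zeta\|c\|$, $\|b\|\le(1+\zeta)\|c\|$ for the upper bound (the paper phrases it in matrix form as $\|(I+W)^{-1}\|^2+\|I-(I+W)^{-1}\|^2$ with $W=F^{-1}N$), and the same reduction of the $\alpha\to0$ claim to a resolvent bound for $L^{-1}\bar M\Pi_{\mathcal I}$, whose zero eigenvalue is handled via the invertibility of the inactive-inactive block of $L^{-1}\bar M$ deduced from the hypothesis $L\bar M^T+\bar M L^T\succ 0$. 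The only cosmetic difference is that the paper certifies this last point through an explicit block eigendecomposition of $Z\Pi_{\mathcal I}$ and a ${\rm cond}(X)$ bound, whereas you argue semisimplicity abstractly via $\ker T^2=\ker T$; the two are equivalent.
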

\vskip 0.1in
\begin{proof}
Let $F \eqdef \sqrt{\alpha} M^{-\frac 1 2} \La M^{-\frac 1 2}$ and $N\eqdef M^{-\frac 1 2}\bar M \Pi_{\I}M^{-\frac 1 2}$. Then we have
$$  M^{-\frac 1 2}{\mathbb S} M^{-\frac 1 2}  = FF^T + N N^T$$
and
$$M^{-\frac 1 2}\widehat {\mathbb S}M^{-\frac 1 2} =  (F+N)(F+N)^T.$$
We first provide the lower bound $\lambda \ge \frac 1 2$.
Let $W \eqdef F^{-1}N$.
For $x\ne 0$ we can write
$$
\lambda =
\frac{ x^T {\mathbb S}x}{x^T \widehat{\mathbb S} x} = 
\frac{ z^T (FF^T + N N^T )z}{z^T  (F+N)(F+N)^T  z} =
\frac{ y^T (WW^T + I )y}{y^T (W+I)(W+I)^T  y},
$$
where $z = M ^ {\frac 1 2 }x$ and $y= F^T z$. Then %As above, 
$\lambda \ge \frac 1 2$ if and only
if 
$$
\frac{ y^T (WW^T + I )y}{y^T (W+I)(W+I)^T  y} \ge \frac 1 2 
$$
which holds since $ 2(WW^T + I) - (W+I)(W+I)^T= (W-I)(W-I)^T \succ  0$.

We now prove the upper bound.
From $M^{-\frac 1 2} {\mathbb S} M^{-\frac 1 2}  x = \lambda M^{-\frac 1 2} \widehat {\mathbb S} M^{-\frac 1 2} x$
we obtain for $z=M^{\frac 1 2}x$
\begin{eqnarray}\label{eqn:Feig}
(F+N)^{-1} (FF^T + NN^T) (F+N)^{-T} z = \lambda z .
\end{eqnarray}
Recalling the definition of $W$ we have
\begin{eqnarray*}
(F+N )^{-1} (FF^T + NN^T) (F+N)^{-T} =
(I+W)^{-1} (I+WW^T)(I+W)^{-T}
%&&(F+N )^{-1} (FF^T + NN^T) (F+N)^{-T}  \\
%&\qquad & =
%(F+N)^{-1}F (I + F^{-1}NN^TF^{-T} ) F^T (F+N)^{-T}  \\
%&\qquad & = (I+F^{-1}N)^{-1} (I + F^{-1}NN^TF^{-T} )  (I+F^{-1}N)^{-T}  \\
%&\qquad& =: (I+W)^{-1} (I+WW^T)(I+W)^{-T}.
\end{eqnarray*}
Therefore, from (\ref{eqn:Feig}) it follows
\begin{eqnarray}
\lambda  &\le& % \|(F+(I-\Pi))^{-1} (FF^T + (I-\Pi)) (F+(I-\Pi))^{-T}\| \\
%& = &
 \|(I+W)^{-1} (I+WW^T)(I+W)^{-T}\|  
\le  \|(I+W)^{-1}\|^2 + \|(I+W)^{-1}W\|^2  \nonumber\\
&=& \|(I+W)^{-1}\|^2 + \|I- (I+W)^{-1}\|^2 \nonumber \\
&\le& \|(I+W)^{-1}\|^2 + (1 + \|(I+W)^{-1}\|)^2.\label{eqn:lambda}
\end{eqnarray}
Recalling that 
$$
W = F^{-1}N = %(\sqrt{\nu} M^{-\frac 1 2} L M^{-\frac 1 2})^{-1} (I-\Pi) 
 \frac 1 {\sqrt{\alpha}} M^{\frac 1 2} \La^{-1} M^{\frac 1 2} N =
\frac 1 {\sqrt{\alpha}} M^{\frac 1 2}  \La^{-1}\bar M \Pi_{\I} M^{-\frac 1 2} =
\frac 1 {\sqrt{\alpha}} M^{\frac 1 2}  \La^{-1}\bar M M^{-\frac 1 2} \Pi_{\I}.
$$
we have 
\begin{eqnarray*}
\zeta = \|(I+W)^{-1}\| &=& \|(I+ \frac {1} {\sqrt{\alpha}} M^{\frac 1 2} \La^{-1} \bar M \Pi_{\I} M^{-\frac 1 2})^{-1} \| \\
%&=& \|M^{-\frac 1 2} \left (M^{-1} +\frac 1 {\sqrt{\nu}} L^{-1}(I-\Pi)\right )^{-1} M^{-\frac 1 2} \| \\
%&=& 
%\|M^{\frac 1 2} \left (L  +\frac 1 {\sqrt{\nu}} M (I-\Pi)\right )^{-1} L M^{-\frac 1 2} \| \\
& = &  \|M^{\frac 1 2} \left (\sqrt{\alpha} \La  + \bar M \Pi_{\I}\right )^{-1} \sqrt{\alpha} \La M^{-\frac 1 2} \|.
\end{eqnarray*}
%{\color{red}
%Let $H=(\sqrt{\alpha} \La)^{-1} \bar M \Pi_{\I}$.
%We have that $\| \left(I + (\sqrt{\alpha} \La)^{-1} \bar M \Pi_{\I} \right )^{-1}\|^2 
%= \| (I + H)^{-1}\|^2 = \lambda_{\max} ( (I+H)^{-T}(I+H)^{-1})$. We observe that for 
%$L\bar M^T +\bar M L^T \succeq 0$ it also holds that $H+H^T \succeq 0$. For $\lambda \ne 0$,
%$$
%(I+H)^{-T}(I+H)^{-1} x = \lambda x \qqud \Leftrightarrow \quad
%\frac 1 {\lambda} v = (I+H)^T(I+H) v.
%$$
%Multiplying from the left by $v^T\ne 0$ we obtain
%$\frac 1 {\lambda} v^Tv = v^Tv + v^T(H+H^T)v + v^T H^T Hv \ge v^Tv$. Therefore $\lambda \le 1$.
%It thus follows that $\| (I + H)^{-1}\| \le 1$ so that $\zeta \le \kappa(M^{\frac 1 2})$.}
%
%
%{ \bf For Valeria: below you find the proof we gave in SISC. Here we should estimate the eigenvalue of $\tilde Z$ below
%taking into account that $N = M^{-\frac 1 2} \begin{bmatrix}\bar M_\ell & 0 \\
%                                                            \bar M_{n-\ell} & 0\end{bmatrix} M^{-\frac 1 2}$ help! }
%
% OLD PART
To analyze the behavior for $\alpha\to 0$, let us suppose that $L\bar M^T +\bar M L^T \succ 0$.
Without loss of generality assume that $\Pi_\I = {\rm blkdiag}(I_\ell,0)$.
Let $Z=M^{\frac 1 2}  \La^{-1}\bar M M^{-\frac 1 2}$, so that
$W = \frac 1 {\sqrt{\alpha}} Z \Pi_\I = \frac 1 {\sqrt{\alpha}} [Z_{11},{Z_{12}};0,0]$.
Thanks to the hypothesis on $\bar M L^T$, the matrix $Z_{11}$ is also positive definite, that
is its eigenvalues all have positive real part. Let 
$Z\Pi_\I = X {\rm blkdiag}(\Lambda_1,0) X^{-1}$ be the eigendecomposition \footnote{In the 
unlikely case of a Jordan decomposition, the proof proceeds with the maximum over norms of 
Jordan blocks inverses, which leads to the same final result.} of $Z\Pi_\I$. Then
%
%The eigendecomposition of $\widetilde F^{-1}(I-\Pi)$ is given by
%$\widetilde F^{-1}(I-\Pi) = X \Lambda X^{-1}$ where $\Lambda={\rm diag}(\lambda_i)$
%and $\lambda_i \in {\rm spec}((\widetilde F^{-1})_{11}) \cup \{0\}$. Here $(\widetilde F^{-1})_{11}$ is
%the top left $\ell\times \ell$ block of $\widetilde F^{-1}$.
%Note that %$\Re(\lambda_i) \ge 0$ and that 
%all eigenvalues of $(\widetilde F^{-1})_{11}$
%have strictly positive real part, thanks to the condition $L+L^T \succ 0$. 
%Therefore %we have
{%\small
\begin{eqnarray*}
\|(I+W)^{-1}\| 
%\|X (I + \frac 1 {\sqrt{\nu}} \Lambda)^{-1} X^{-1} \| 
%=
%\|X \begin{bmatrix} \ddots & & \\ & \frac 1 {1+\lambda_i/\sqrt{\nu}} & \\
% & & \ddots \end{bmatrix} X^{-1} \| \\
%\\
%&\le &
\le
{\rm cond}(X) 
\max \left\{ \frac 1 {\displaystyle\min_{\lambda\in {\rm spec}(Z_{11})} |1+\lambda/\sqrt{\alpha}|}, 1\right\} .
\end{eqnarray*}
}
We thus have
$$
\max \left\{ \frac 1 {\displaystyle \min_{\lambda\in {\rm spec}( Z_{11})} |1+\lambda/\sqrt{\alpha}|}, 1\right\} \to 1 
\qquad {\rm for } \qquad \alpha \to 0 ,
$$
so that $\|(I+W)^{-1}\| \le \eta\, {\rm cond}(X)$ with $\eta\to 1$ for $\alpha \to 0$.
\end{proof}

\vskip 0.1in

In practice, the matrix $\widehat {\mathbb S}$ is replaced by an approximation
whose inverse is cheaper to apply. This is commonly performed by using a spectrally 
equivalent matrix $\check {\mathbb S}$, so that there exist two positive constants $c_1, c_2$
independent of the mesh parameter such that
$c_1 x^T {\check{\mathbb S}} x  \le
 x^T {\widehat{\mathbb S}} x  \le
c_2 x^T {\check{\mathbb S}} x$.
For this choice, we thus obtain the following spectral bounds for $\check{\mathbb S}^{-1}{\mathbb S}$,
$$
 \frac {c_1}{2}\le \frac{x^T {\mathbb S} x}{x^T {\check{\mathbb S}} x}  =
 \frac{x^T {\mathbb S} x}{ x^T {\widehat{\mathbb S}} x}  
 \frac{x^T \widehat {\mathbb S} x}{ x^T {\check{\mathbb S}} x}  \le 
c_2 (\zeta^2+(1+\zeta)^2) .
$$

Based on $\widehat {\mathbb S}$ and $\widehat{S}$ defined in (\ref{smhat}) and (\ref{schurmS}), respectively, 
we introduce convenient preconditioners for the linear systems (\ref{eqN_4}) and (\ref{eqN_2}).
For the augmented linear system in (\ref{eqN_4}) we consider a block diagonal preconditioner
and  indefinite preconditioner of the form:
\begin{equation}\label{BDF}
  \pd = 
\begin{bmatrix}
J_{11} & 0   \\
0 & \widehat{S}
\end{bmatrix},
\qquad
\end{equation}
and
\begin{equation}\label{CPF}
  \pc = \begin{bmatrix}
I & 0   \\
J_{12} J_{11}^{-1}  & I 
\end{bmatrix}
\begin{bmatrix}
J_{11} & 0   \\
0 & - \widehat{S} 
\end{bmatrix}
\begin{bmatrix}
I & J_{11}^{-1} J_{12}^T  \\
0 & I 
\end{bmatrix}, 
\end{equation}
where 
$$
J_{11} =  {\rm blkdiag}(M, \alpha M) \mbox{ \ and \ } J_{12} = \begin{bmatrix} \La & -\bar M \\ 0  & P_{\A}M\end{bmatrix} .
$$ 
%while in  (\ref{eqN_3})
%$$
%J_{11} =  M \mbox{ \ and \  } J_{12} = \begin{bmatrix} \La  \\ 0\end{bmatrix}.
%$$ 
For the reduced system in (\ref{eqN_4})  we consider 
\begin{equation}\label{BDFred}
  \pdr = 
\begin{bmatrix}
M & 0   \\
0 & \frac 1 \alpha \widehat {\mathbb S}
\end{bmatrix},
\qquad
\end{equation}
and
\begin{equation}\label{CPFred}
  \pcr = \begin{bmatrix}
I & 0   \\
\La M^{-1}  & I 
\end{bmatrix}
\begin{bmatrix}
M & 0   \\
0 & - \frac 1 \alpha \widehat {\mathbb S}
\end{bmatrix}
\begin{bmatrix}
I & M^{-1} \La^T  \\
0 & I 
\end{bmatrix}.
\end{equation}

In the following we analyze the spectral properties of the
preconditioned coefficient matrices when the above preconditioners are applied.

We recall a result from \cite{Perugia.Simoncini.00}.
\vskip 0.1in
\begin{proposition}{\rm \cite[Prop.1]{Perugia.Simoncini.00}}\label{prop:PS}
Assume the matrix ${\cal M} = [A, B^T; B, -C]$ is given, with $B$ tall, $A$ symmetric positive
definite and $C$ symmetric positive semidefinite.
Let $\gamma_{\max} = \|C\|$, $\sigma_{\max}=\|B\|$, $\alpha_0 = \lambda_{\min}(A)$ and
$\theta = \lambda_{\min}(C+BA^{-1}B^T)$. Then the eigenvalues of ${\cal M}$ are contained
in $I^-\cup I^+$ with
$$
I^- = 
\left [ \frac{-\gamma_{\max} + \alpha_0 - \sqrt{ (\gamma_{\max} + \alpha_0)^2 + 4\sigma_{\max}^2}}2,
\frac{1-\sqrt{1+4\theta}}{2}\right],
$$
$$
I^+ = \left[\alpha_0, \frac{1+\sqrt{1+4\sigma_{\max}^2}}{2}\right].
$$
\end{proposition}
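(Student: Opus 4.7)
The plan is to expand the eigenvalue relation ${\cal M}[u;v] = \lambda[u;v]$ into its two block equations $Au + B^T v = \lambda u$ and $Bu - Cv = \lambda v$, then eliminate one block (depending on the sign of $\lambda$) and reduce to a scalar inequality in $\lambda$ through a Rayleigh quotient. The positive and negative eigenvalues are treated separately, and the degenerate case $u=0$ (possible since $B$ is tall, so $B^T$ has a nontrivial kernel) is handled first: it forces $B^T v = 0$ and $Cv = -\lambda v$, so $\lambda \in [-\gamma_{\max}, 0]$; since $v \in \ker B^T$ the identity $v^T(C + BA^{-1}B^T) v = v^T C v \ge \theta\|v\|^2$ gives $\lambda \le -\theta \le (1-\sqrt{1+4\theta})/2$, placing such $\lambda$ in $I^-$.

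For $\lambda > 0$ with $u \ne 0$, since $\lambda I + C \succ 0$ I eliminate $v = (\lambda I + C)^{-1} B u$ from the second equation, substitute into the first, and form the inner product with $u/\|u\|^2$ to obtain $\lambda = \mu + \psi(\lambda)$, where $\mu = u^T A u/\|u\|^2 \ge \alpha_0$ and $0 \le \psi(\lambda) = u^T B^T (\lambda I + C)^{-1} B u/\|u\|^2 \le \sigma_{\max}^2/\lambda$. The lower bound $\lambda \ge \alpha_0$ is immediate, while the upper estimate for $\psi$ gives the scalar quadratic $\lambda^2 - \mu \lambda - \sigma_{\max}^2 \le 0$; under the normalisation $\mu \le 1$ implicit in the form of $I^+$, this yields the right endpoint $(1 + \sqrt{1+4\sigma_{\max}^2})/2$.

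For $\lambda < 0$ the block $A - \lambda I$ is SPD (because $\alpha_0 > 0 > \lambda$), so I eliminate $u = -(A - \lambda I)^{-1} B^T v$ from the first equation and arrive at $(C + B(A - \lambda I)^{-1} B^T)\,v = -\lambda\, v$. Inner product with $v/\|v\|^2$ gives $-\lambda = \gamma + \phi(\lambda)$ with $\gamma = v^T C v/\|v\|^2$ and $\phi(\lambda) = v^T B (A - \lambda I)^{-1} B^T v/\|v\|^2$. The outer (most negative) endpoint of $I^-$ follows from $\gamma \le \gamma_{\max}$ and $\phi(\lambda) \le \sigma_{\max}^2/(\alpha_0 - \lambda)$ by solving $\lambda^2 + (\gamma_{\max} - \alpha_0)\lambda - \alpha_0 \gamma_{\max} - \sigma_{\max}^2 \le 0$ in the negative half-line, whose smaller root is exactly the stated expression. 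For the inner (least negative) endpoint I use $\gamma + \phi(0) = v^T(C+BA^{-1}B^T)v/\|v\|^2 \ge \theta$; in the canonical case $A = I$ one has $\phi(\lambda) = \beta/(1 - \lambda)$ with $\beta = v^T B B^T v/\|v\|^2$, so the relation $\beta = (\lambda - 1)(\lambda + \gamma)$ combined with $\beta + \gamma \ge \theta$ gives $\lambda^2 + (\gamma - 1)\lambda - \theta \le 0$, whose least negative root, maximised over $\gamma \ge 0$ at $\gamma = 0$, is precisely $(1 - \sqrt{1+4\theta})/2$.

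The main obstacle I anticipate is the inner endpoint of $I^-$ when $A \ne I$: the natural monotonicity $(A - \lambda I)^{-1} \prec A^{-1}$ for $\lambda < 0$ furnishes only an \emph{upper} bound on $\phi(\lambda)$, which is the wrong direction for bounding $-\lambda$ from below. The cleanest resolution relies on the normalisation $\|A\| \le 1$ implicit in the proposition (as is standard in the preconditioned settings where this result is invoked); if one wishes to treat truly general SPD $A$, a sharper argument via the spectral resolution $A = Q\Lambda Q^T$, writing $\phi(\lambda) = \sum_i w_i^2/(\lambda_i - \lambda)$ and optimising the resulting rational function of $\lambda$ against the coefficients $w_i$, is required to upgrade the scalar-quadratic argument from $A=I$ to the general case. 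The remaining manipulations are routine, following the same recipe as Proposition~\ref{prop:upperestimate}.
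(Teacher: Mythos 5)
First, a framing note: the paper does not prove this proposition at all --- it is quoted verbatim from \cite[Prop.~1]{Perugia.Simoncini.00} --- so there is no in-paper argument to compare against. Your block-elimination/Rayleigh-quotient strategy is the standard route for such bounds (and the one underlying the cited reference, in the Rusten--Winther tradition); the treatment of the degenerate case $u=0$, of $I^+$, and of the outer endpoint of $I^-$ is correct, and you are right that the bare ``$1$''s in the endpoints only make sense under a normalisation of $A$. In the paper's actual application (Theorem~\ref{thm:BDbounds}) the $(1,1)$ block is exactly the identity, so the case $A=I$ that you work out in full is the one that matters.

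Two points in your negative-eigenvalue analysis need fixing. (i) A sign: from $\beta=(\lambda-1)(\lambda+\gamma)$ and $\beta+\gamma\ge\theta$ one gets $\lambda^2+(\gamma-1)\lambda-\theta\ge 0$, not $\le 0$; since this quadratic is $-\theta\le 0$ at $\lambda=0$, the inequality forces $\lambda$ to lie to the \emph{left} of its negative root, which is the upper bound you want, whereas ``$\le 0$'' would place $\lambda$ between the roots and yield only a lower bound. (ii) The normalisation you invoke, $\|A\|\le 1$, repairs the right endpoint of $I^+$ (where you need $\mu\le 1$) but points the \emph{wrong} way for the inner endpoint of $I^-$: there one needs $(A-\lambda I)^{-1}\succeq (1-\lambda)^{-1}A^{-1}$ for $\lambda<0$, which holds iff $A\succeq I$, i.e.\ $\lambda_{\min}(A)\ge 1$. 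Concretely, with $A=1/2$, $B=[1;0]$, $C=\mathrm{diag}(0,2)$ one has $\|A\|\le 1$ and $\theta=2$, yet the negative eigenvalue closest to zero is $(1-\sqrt{17})/4\approx-0.78$, which violates the claimed bound $(1-\sqrt{1+4\theta})/2=-1$. So the statement as quoted really presupposes $\lambda_{\min}(A)\ge 1$ together with $\lambda_{\max}(A)\le1$, i.e.\ $A=I$; under that hypothesis your argument, with the sign corrected, closes in one line via $-\lambda(1-\lambda)\ge\gamma(1-\lambda)+\delta\ge\gamma+\delta\ge\theta$ (with $\delta=v^TBA^{-1}B^Tv/\|v\|^2$), and no spectral-resolution machinery is needed.
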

\vskip 0.1in

The following result holds for the spectrum of the  matrices
$J^{aug}$ and $J^{red}$ preconditioned by the block diagonal
matrix in (\ref{BDF}) and in (\ref{BDFred}), respectively.
\vskip 0.1in
\begin{theorem}\label{thm:BDbounds}
With the notation of Proposition \ref{prop:PS},
let $\xi = \zeta^2 + (1+\zeta)^2$.

i) The eigenvalues of the block 4$\times$4 preconditioned matrix $(\pd)^{-1} {J^{aug}}$
belong to $I^-\cup I^+$ with
$I^- =\left [ \frac {1 - \sqrt{ 1+4 {\xi}}}2, \frac {1-\sqrt{3}}{2}\right ]$ and
$I^+ = [1, \frac{1+\sqrt{1+4 \xi}}{2}]$.

ii) The eigenvalues of the block 2$\times$2 preconditioned matrix $(\pdr)^{-1} {J}^{red}$
belong to $I^-\cup I^+$ with
$I^- =\left [ \frac {-\xi+1 - \sqrt{ (\xi+1)^2+4\zeta^2}}2, \frac {1-\sqrt{3}}{2}\right ]$ and
$I^+ = [1, \frac{1+\sqrt{1+4\zeta^2}}{2}]$.
\end{theorem}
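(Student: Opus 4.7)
The plan is to reduce both statements to Proposition~\ref{prop:PS} by symmetrizing each preconditioned matrix through the congruence $\mathcal{P}^{-1/2}(\cdot)\mathcal{P}^{-1/2}$, where $\mathcal{P}$ is the block diagonal preconditioner at hand. This puts the matrix into the $[A,B^T;B,-C]$ form required by Proposition~\ref{prop:PS} with $A=I$, after which it remains to bound the four scalars $\alpha_0$, $\sigma_{\max}$, $\gamma_{\max}$, $\theta$ in terms of $\zeta$ and $\xi$ and then to substitute them into the intervals the proposition provides.

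For part (i), the congruence yields $B=\widehat S^{-1/2}J_{12}J_{11}^{-1/2}$ and $C=0$, so $\alpha_0=1$ and $\gamma_{\max}=0$. The key step is to observe that $BB^T$ is similar to $\widehat S^{-1}S$, where $S=J_{12}J_{11}^{-1}J_{12}^T$ is the exact active-set Schur complement displayed just before~(\ref{schurF1}). Because $S$ and $\widehat S$ share the same unit block-triangular factor $U=\bigl[\begin{smallmatrix}I & -\bar M\Pi_{\A}M^{-1}P_{\A}^T\\0 & I\end{smallmatrix}\bigr]$ around the block pair involving $\mathbb S$ (respectively $\widehat{\mathbb S}$) and $P_{\A}MP_{\A}^T$, the spectrum of $\widehat S^{-1}S$ decomposes into that of $\widehat{\mathbb S}^{-1}\mathbb S$ together with $n_{\A}$ unit eigenvalues. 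Proposition~\ref{prop:upperestimate} places all of these in $[1/2,\xi]$, giving $\sigma_{\max}^2\le\xi$ and $\theta\ge 1/2$; Proposition~\ref{prop:PS} then returns exactly the claimed $I^\pm$.

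For part (ii), the symmetrization identifies $B=\sqrt{\alpha}\,\widehat{\mathbb S}^{-1/2}\La M^{-1/2}$ and $C=\widehat{\mathbb S}^{-1/2}(\bar M M^{-1}\Pi_{\I}\bar M^T)\widehat{\mathbb S}^{-1/2}$. The strategy is to recycle $F=\sqrt{\alpha}M^{-1/2}\La M^{-1/2}$, $N=M^{-1/2}\bar M\Pi_{\I}M^{-1/2}$ and $W=F^{-1}N$ from the proof of Proposition~\ref{prop:upperestimate}, together with the factorization $\widehat{\mathbb S}=M^{1/2}(F+N)(F+N)^TM^{1/2}$ and the identities $F+N=F(I+W)$, $N=FW$. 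A short calculation would then show that $BB^T$ is similar to $(I+W)^{-T}(I+W)^{-1}$ and $C$ is similar to $[(I+W)^{-1}W]^T[(I+W)^{-1}W]$; combined with $(I+W)^{-1}W=I-(I+W)^{-1}$ and $\|(I+W)^{-1}\|=\zeta$, this produces $\sigma_{\max}^2\le\zeta^2$ and $\gamma_{\max}\le(1+\zeta)^2\le\xi$. Finally, since $M$ is diagonal so that $\Pi_{\I}$ commutes with $M^{-1}$, the expression $\mathbb S=\alpha \La M^{-1}\La^T+\bar M M^{-1}\Pi_{\I}\bar M^T$ gives $BB^T+C=\widehat{\mathbb S}^{-1/2}\mathbb S\widehat{\mathbb S}^{-1/2}$, so Proposition~\ref{prop:upperestimate} supplies $\theta\ge 1/2$ once more. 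Plugging these four quantities into Proposition~\ref{prop:PS} yields the stated intervals.

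The main technical obstacle lies entirely in part (ii): the factor $M^{1/2}(F+N)$ must be threaded carefully through each product so as to collapse $BB^T$, $C$ and $BB^T+C$ into expressions whose norms are already controlled by Proposition~\ref{prop:upperestimate}. Once that similarity bookkeeping is in place, the rest of the argument is routine substitution into Proposition~\ref{prop:PS}, and the use of the (loose but convenient) majorization $\gamma_{\max}\le\xi$ is what delivers the symmetric-looking $\xi$ inside the lower bound for $I^-$ in part (ii).
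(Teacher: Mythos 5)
Your proposal is correct and follows essentially the same route as the paper: symmetrize each preconditioned system by the congruence with $\mathcal{P}^{-1/2}$, read off $\alpha_0=1$, bound $\sigma_{\max}$, $\gamma_{\max}$ and $\theta$ via the spectrum of $\widehat{\mathbb S}^{-1}\mathbb S$ (Proposition \ref{prop:upperestimate}) and the block factorization of $S$ and $\widehat S$, and substitute into Proposition \ref{prop:PS}. In fact your part (ii) spells out, via the $F$, $N$, $W$ similarity bookkeeping, the identities $\sigma_{\max}=\|(I+W)^{-1}\|=\zeta$ and $\gamma_{\max}\le\xi$ that the paper states without derivation.
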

\vskip 0.1in
\begin{proof}
In the proof we suppress the superscript ``BDF'' and the subscripts ``aug''/``red''.

i) We rewrite the eigenproblem $J x = \lambda {\cal P} x$ as 
 ${\cal P}^{-\frac 1 2} J{\cal P}^{-\frac 1 2}  y = \lambda y$, with
$$
{\cal P}^{-\frac 1 2} J{\cal P}^{-\frac 1 2} = 
\begin{bmatrix}
I & J_{11}^{-\frac 1 2} J_{12}^T \widehat S^{-\frac 1 2} \\
\widehat S^{-\frac 1 2} J_{12} J_{11}^{-\frac 1 2} & 0 %\widehat S^{-\frac 1 2} S \widehat S^{-\frac 1 2} .
\end{bmatrix}.
$$
We write $S = Q {\rm blkdiag}({\mathbb S},P_{\cal A} M P_{\cal A}^T) Q^T$ and
$\widehat S = Q {\rm blkdiag}(\widehat {\mathbb S},P_{\cal A} M P_{\cal A}^T) Q^T$, with
obvious meaning for $Q$. Then we
obtain that $\widehat S^{-1} S = Q^{-T} {\rm blkdiag}(\widehat {\mathbb S}^{-1} {\mathbb S},I) Q^T $ 
so that the eigenvalues of $\widehat S^{-1} S$ are contained in $[\frac 12, \zeta^2+(1+\zeta)^2]$.
%with 
%$$
% J_{11}^{-\frac 1 2} J_{12}^T \widehat S^{-\frac 1 2}  =
%\begin{bmatrix} 
%M^{-\frac 1 2} L^T  & 0 \\ -\frac 1 \alpha \bar M M^{-\frac 1 2}  & \frac 1 \alpha M^{-\frac 1 2} M P_{\cal A}  
%\end{bmatrix} \widehat S^{-\frac 1 2}  .
%$$
With the notation of Proposition \ref{prop:PS} and Proposition \ref{prop:upperestimate}, we have that
$\alpha_0 =1$, $\gamma_{\max}=0$, 
$\sigma_{\max}=\|\widehat S^{-1/2}S\widehat S^{-1/2}\|^{1/2} \le \sqrt{\xi}$ and
$\theta \ge \frac 12$. Therefore, substituting in the intervals of Proposition \ref{prop:PS}
we obtain that the eigenvalue $\lambda$ belongs to $I^-\cup I^+$ with
$I^- =\left [ \frac {1 - \sqrt{ 1+4 {\xi}}}2, \frac {1-\sqrt{3}}{2}\right ]$ and
$I^+ = [1, \frac{1+\sqrt{1+4 \xi}}{2}]$.

ii) We rewrite the generalized eigenproblem 
$$
\begin{pmatrix}
M  & \La^T   \\
\La & - \frac 1 {\alpha} \bar M M^{-1}\Pi_{\mathcal{I}}\bar M^T  
\end{pmatrix}
z = \lambda \begin{pmatrix}
M  &    \\
 & \frac 1 {\alpha}\widehat {\mathbb S} 
\end{pmatrix}
z
$$
as
$$
\begin{pmatrix}
I  &  {\sqrt{\alpha}} M^{-\frac 1 2}\La^T \widehat {\mathbb S}^{-\frac 1 2}  \\
\sqrt{\alpha} \widehat {\mathbb S}^{-\frac 1 2}\La M^{-\frac 1 2} & - 
\widehat {\mathbb S}^{-\frac 1 2} \bar M M^{-1}\Pi_{\mathcal{I}}\bar M^T  \widehat {\mathbb S}^{-\frac 1 2}
\end{pmatrix}
w = \lambda w .
$$
Let $\xi = \zeta^2 + (1+\zeta)^2$.
Again with the notation of Proposition \ref{prop:PS} and Proposition \ref{prop:upperestimate}, we have that
$\alpha_0 =1$, $\gamma_{\max} \le \xi$, $\sigma_{\max} = \zeta$ and
$\theta \ge \frac 1 2$. Therefore, substituting in the intervals of Proposition \ref{prop:PS}
we obtain that the eigenvalue $\lambda$ belongs to $I^-\cup I^+$ with
$I^- =\left [ \frac {-\xi+1 - \sqrt{ (\xi+1)^2+4\zeta^2}}2, \frac {1-\sqrt{3}}{2}\right ]$ and
$I^+ = [1, \frac{1+\sqrt{1+4\zeta^2}}{2}]$.
\end{proof}
\vskip 0.1in

The intervals in Theorem \ref{thm:BDbounds} have different width in the
two formulations. While the positive interval is smaller in the $2\times 2$
case, the negative one may be significantly larger, especially for large 
$\zeta$, suggesting slower convergence of the preconditioned solver. However,
we have noticed that the left extreme of $I^-$ is not very sharp (see for
instance the next example) therefore the obtained spectral intervals may 
be a little pessimistic.

In Figure \ref{fig:stime} we report
a sample of the eigenvalue estimates in Theorem \ref{thm:BDbounds} 
for the augmented and reduced systems, as the nonlinear iterations proceed. The convection-diffusion problem is considered,
with $\alpha=10^{-4}, \beta=10^{-4}$. The solid curves are the new bounds, while the circles (resp. the
asterisks) are the computed most exterior (resp. interior) eigenvalues of the preconditioned matrix.

\begin{figure}%
\centering
\subfloat[$\lambda((\pd)^{-1} J^{aug})$]
{
\includegraphics[width=.5\textwidth]{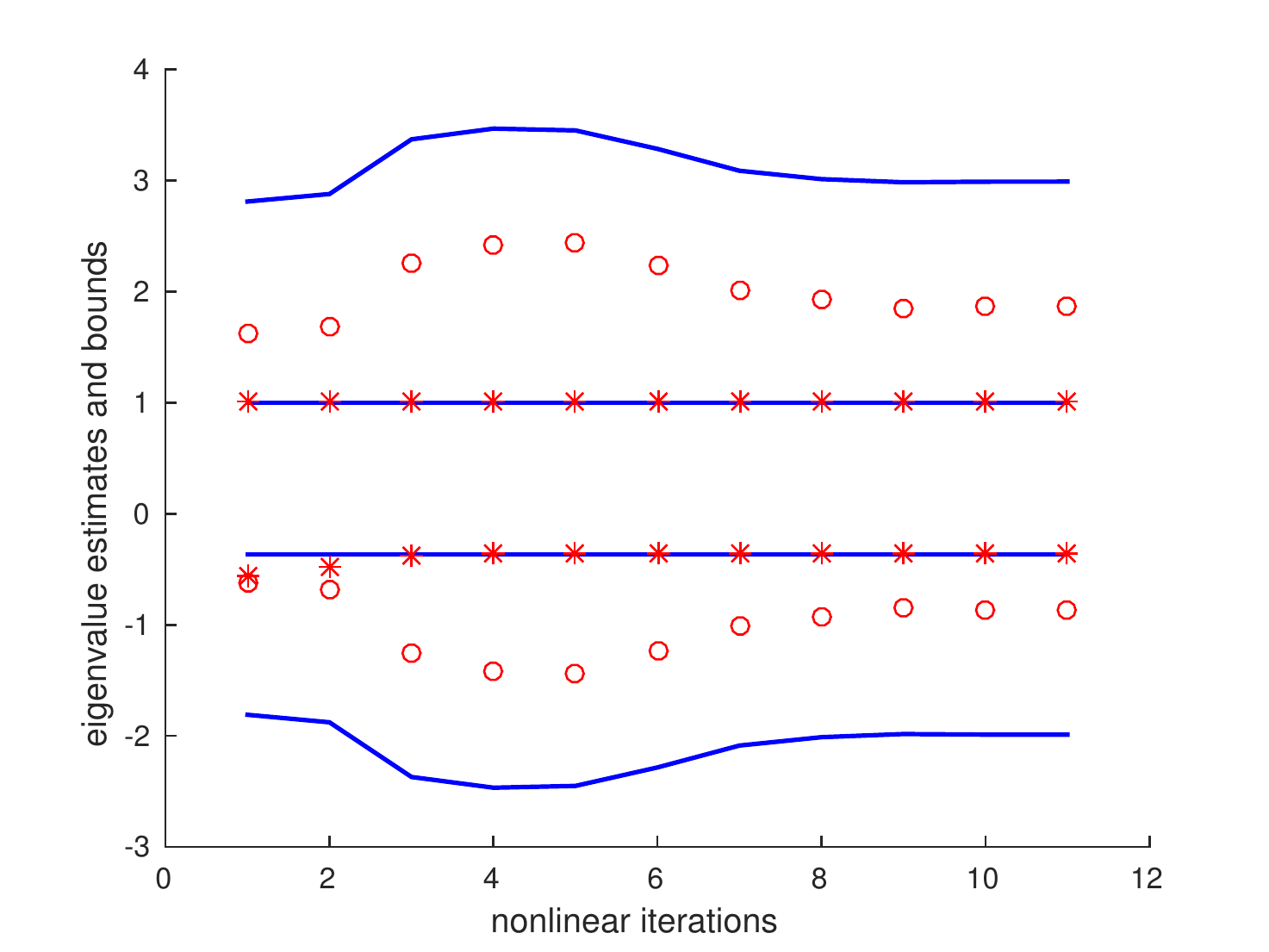}
}
\subfloat[$\lambda((\pdr)^{-1} J^{red})$]{
\includegraphics[width=.5\textwidth]{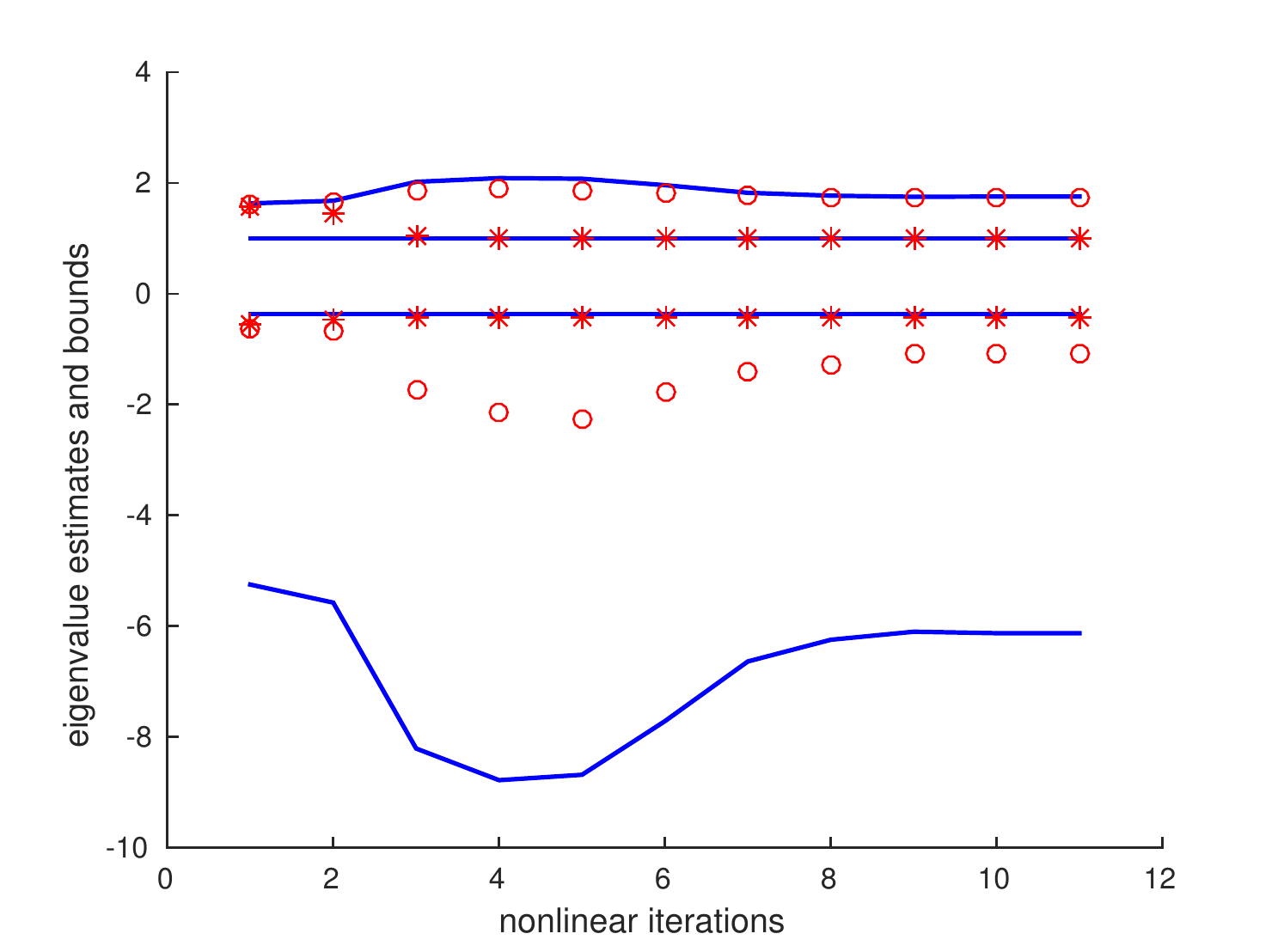}
}
\caption{Eigenvalues of the preconditioned matrices and bounds provided
 in Theorem \ref{thm:BDbounds} versus number of nonlinear iterations 
 (CD problem, $\alpha=10^{-4}, \beta=10^{-4}$). \label{fig:stime}}%
\end{figure}

The following result provides spectral information when the
indefinite preconditioner is applied to both formulations.
\vskip 0.1in
\begin{proposition}
The following results hold.

i) Let $\lambda$ be an eigenvalue of $(\pc)^{-1} J^{aug}$. Then
$\lambda \in \{1\} \cup [\frac 1 2, \zeta^2+(1+\zeta)^2]$.
Moreover, there are at least $3n+|{\cal A}|  -2|\Pi_{\cal I}|$ eigenvalues equal to $1$.
\vskip 0.08in
ii) Let $\lambda$ be an eigenvalue of $(\pcr)^{-1} J^{red}$. Then
$\lambda \in \{1\} \cup [\frac 1 2, \zeta^2+(1+\zeta)^2]$.
Moreover, there are at least $2n -2|\Pi_{\cal I}|$ eigenvalues equal to $1$.

\end{proposition}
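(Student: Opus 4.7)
The plan is to exploit the fact that $\pc$ and $\pcr$ are, by construction, the exact block-$LDL^T$ factorizations of $J^{aug}$ and $J^{red}$ with the Schur-complement middle block replaced by its approximation. Consequently, the spectral analysis reduces to the pencil $(\mathbb{S},\widehat{\mathbb{S}})$, whose localization has already been settled in Proposition~\ref{prop:upperestimate}.

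First I would write the exact block-triangular factorization
\[
J^{aug} = \begin{bmatrix} I & 0 \\ J_{12}J_{11}^{-1} & I \end{bmatrix}
\begin{bmatrix} J_{11} & 0 \\ 0 & -S \end{bmatrix}
\begin{bmatrix} I & J_{11}^{-1}J_{12}^T \\ 0 & I \end{bmatrix} =: \mathcal{L}\,\mathcal{D}_S\,\mathcal{L}^T ,
\]
while by definition $\pc = \mathcal{L}\,\mathcal{D}_{\widehat{S}}\,\mathcal{L}^T$, where $\mathcal{D}_{\widehat{S}}$ differs from $\mathcal{D}_S$ only by replacing $-S$ with $-\widehat{S}$. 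Hence
\[
(\pc)^{-1}J^{aug}=\mathcal{L}^{-T}\mathcal{D}_{\widehat{S}}^{-1}\mathcal{D}_S\mathcal{L}^T,
\]
which is similar to ${\rm blkdiag}(I_{2n},\widehat{S}^{-1}S)$. The same manipulation applied to the reduced system, using that the $(2,2)$ block of $J^{red}$ coincides (after $LDL^T$ factorization and the commutation of the diagonal $M$ with $\Pi_{\mathcal{I}}$) with $-\mathbb{S}/\alpha$, shows that $(\pcr)^{-1}J^{red}$ is similar to ${\rm blkdiag}(I_n,\widehat{\mathbb{S}}^{-1}\mathbb{S})$.

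Next I would peel off the inner factorization of $S$ and $\widehat{S}$ displayed just before \eqref{schurmS}. Since the outer unit-triangular factors are the same for $S$ and $\widehat{S}$, the product $\widehat{S}^{-1}S$ is similar to ${\rm blkdiag}(\widehat{\mathbb{S}}^{-1}\mathbb{S},I_{|\mathcal{A}|})$. Combining this with the previous step, the spectrum of $(\pc)^{-1}J^{aug}$ equals $\{1\}$ together with that of $\widehat{\mathbb{S}}^{-1}\mathbb{S}$, and the same conclusion holds for $(\pcr)^{-1}J^{red}$. The inclusion $\{1\}\cup[\tfrac12,\zeta^2+(1+\zeta)^2]$ claimed in both (i) and (ii) then follows directly from Proposition~\ref{prop:upperestimate}.

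It remains to count unit eigenvalues, where the key auxiliary input is Proposition~\ref{prop:hatS=SpG_gen}: since $\widehat{\mathbb{S}}-\mathbb{S}$ has rank at most $2|\Pi_{\mathcal{I}}|$, so does $I-\widehat{\mathbb{S}}^{-1}\mathbb{S}$, and therefore $\widehat{\mathbb{S}}^{-1}\mathbb{S}$ has at least $n-2|\Pi_{\mathcal{I}}|$ unit eigenvalues. For (ii) this adds to the $n$ ones from the $I_n$ block, yielding the claimed total $2n-2|\Pi_{\mathcal{I}}|$. For (i) I accumulate $2n$ ones from $I_{2n}$, $|\mathcal{A}|$ ones from the identity block inside $\widehat{S}^{-1}S$, and a further $n-2|\Pi_{\mathcal{I}}|$ ones from $\widehat{\mathbb{S}}^{-1}\mathbb{S}$, summing to $3n+|\mathcal{A}|-2|\Pi_{\mathcal{I}}|$. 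I do not foresee a serious obstacle: the hard spectral estimates are already packaged in Propositions~\ref{prop:hatS=SpG_gen} and~\ref{prop:upperestimate}, and the only delicate point is the bookkeeping of multiplicities through the two nested levels of block factorization.
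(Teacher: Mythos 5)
Your proposal is correct and follows essentially the same route as the paper: reduce $(\pc)^{-1}J^{aug}$ (resp.\ $(\pcr)^{-1}J^{red}$) to a form whose spectrum is $\{1\}$ together with that of $\widehat S^{-1}S$, peel off the shared congruence factors of $S$ and $\widehat S$ to land on $\widehat{\mathbb S}^{-1}{\mathbb S}$, and then invoke Propositions~\ref{prop:upperestimate} and~\ref{prop:hatS=SpG_gen} for the interval and the unit-eigenvalue count. The only cosmetic difference is that the paper computes the product $(\pc)^{-1}J^{aug}$ explicitly as a block upper-triangular matrix, whereas you obtain the same conclusion by similarity through the common $\mathcal{L}\mathcal{D}\mathcal{L}^T$ factors; the multiplicity bookkeeping is identical.
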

\vskip 0.1in
\begin{proof}
i) Explicit computation shows that
$$
({\cal P}^{IPF})^{-1} J = 
\begin{bmatrix} I & J_{11}^{-1} J_{12}^T(I- \widehat S^{-1}S) \\ 0 & \widehat S^{-1}S\end{bmatrix}
$$
Since 
$$
\widehat S^{-1}S = 
\begin{bmatrix} I & 0 \\ -(\bar M \Pi_{\cal A} M^{-1}P_{\cal A}^T)^T & I \end{bmatrix}^{-1}
\begin{bmatrix} \widehat {\mathbb S}^{-1} {\mathbb S} & 0 \\ 0 & I \end{bmatrix}
\begin{bmatrix} I & 0 \\ (-\bar M \Pi_{\cal A} M^{-1}P_{\cal A}^T)^T & I \end{bmatrix} ,
$$
the eigenvalues of $\widehat S^{-1}S$ are either one, or are the eigenvalues of
$\widehat {\mathbb S}^{-1} {\mathbb S}$, which are contained in the given interval, thanks
to Proposition \ref{prop:upperestimate}.

From Proposition~\ref{prop:hatS=SpG_gen} we obtain that
$\widehat {\mathbb S}^{-1} {\mathbb S}-I$ is a low rank matrix, of rank at most $2|\Pi_{\cal I}|$.
Therefore, there are $2n+|{\cal A}|+(n-2|\Pi_{\cal I}|)$ unit eigenvalues.

ii) For the first part of the proof we proceed as above, since
$$
({\cal P}_{red}^{IPF})^{-1} J_{red} = 
\begin{bmatrix} I & M^{-1} L^T(I- \widehat S^{-1}S) \\ 0 & \widehat S^{-1}S\end{bmatrix} .
$$
For the unit eigenvalue counting,
we notice that the reduced matrix $({\cal P}_{red}^{IPF})^{-1} J_{red}$
has $n$ unit eigenvalues in the (1,1) block and 
$n-2|\Pi_{\cal I}|$ unit eigenvalues from the second block, 
for the same argument as above.
\end{proof}
\vskip 0.1in
We complete this analysis recalling that for both formulations, the
preconditioned matrix $({\cal P}^{IPF})^{-1} J$ is unsymmetric, so that
a nonsymmetric iterative solver needs to be used. In this setting, the
eigenvalues may not provide all the information required to predict the
performance of the solver; we refer the reader to \cite{Sesana.Simoncini.13} where
a more complete theoretical analysis of constraint preconditioning is proposed.

%%%%%%%%%%%%%%%%%%%%%%%%%%%%%%%%%%%
\section{Numerical experiments}\label{exp}
We implemented the semismooth Newton's method described in Algorithm \ref{algo} 
 using MATLAB\textsuperscript{\textregistered} R2016a (both linear and nonlinear solvers)
on  an Intel{\textregistered} Xeon{\textregistered}  2.30GHz, 132 GB of RAM.

Within the Newton's method we employed preconditioned \gmres\ \cite{gmres} and \minres\ \cite{minres} 
as follows 
\begin{itemize}
\item \ascp: {\sc gmres} and indefinite preconditioner $\pc/\pcr,$ 
\item \asbd: {\sc minres} and block diagonal preconditioner  $\pd/\pdr.$
\end{itemize}
The application of the Schur complement approximation $\widehat {\mathbb S}$ requires solving systems with $(\sqrt{\alpha}L + \bar M\Pi_{\I})$. As $L$ here is a discretized PDE operator such solves are quite expensive. 
We thus replace exact solves by approximate solves using an algebraic multigrid technique ({\sc hsl-mi20}) \cite{Boyle2007}. 
We use the function with a Gauss-Seidel coarse solver, $5$ steps of pre-smoothing,  and $3$ V-cycles.

The finite element matrices utilizing the SUPG technique were generated using 
the deal.II library \cite{dealii}, while standard finite differences are used for the Poisson problem.

We used a ``feasible'' starting point (see Section \ref{sec::exact}) and used 
the values $\sigma = 0.1$ and $\gamma = 10^{-4}$ in the line-search strategy described
in Algorithm \ref{algo}, as suggested in \cite{MQ95}. {We used $c=1/\alpha$ in (\ref{Fdis}) in all runs \cite{Sta09,HIK02}}.
Nonlinear iterations are stopped as soon as 
$$
\|\Theta(x_k)\| \le 10^{-6} ,
$$
and a maximum number of 100 iterations is allowed.

In order to implement the ``exact'' Semismooth Newton's method described in Section
\ref{sec::exact}, we solved the linear systems employing a strict tolerance by setting 
\begin{equation}\label{etaex}
\eta_k = 10^{-10}, \ k\ge 0.
\end{equation}

Table~\ref{tab::notation} summarizes the notation used for the numerical results. We start by illustrating 
the performance of the different formulations using (\ref{etaex}) for the Poisson problem, which is followed by the results of 
the convection-diffusion system including a discussion on inexact implementations of Algorithm \ref{algo}.

\begin{table}
\begin{center}
\begin{tabular}{ll}
\toprule
parameter& description\\
\midrule
%$t$ & problem number;\\
$\ell$ & discretization level for the Poisson problem\\
 $\beta,\alpha$ & values of the regularization parameters\\
 {\sc li} &  average number of linear inner iterations\\
 {\sc nli} & number of nonlinear outer iterations\\
 {\sc cpu} &  average CPU time of the inner solver (in secs) \\
 {\sc tcpu} & total CPU time  (in secs) \\
 \%u=0 &  the percentage of zero elements in the computed control\\
 %%%* &  maximum number of nonlinear iterations (100) reached.\\
\bottomrule
\end{tabular}
\caption{Legend of symbols for the numerical experiments.\label{tab::notation}}
\end{center}
\end{table}

%%%%%%%%%%%%%%%%%%%%%%%%%%%%%%%%%%%%%%%%%%
\subsection{Poisson problem}
The first problem that we consider is the Poisson problem defined over the set
 $\Omega=(0,1)^d$, $d={2,3}$ and $\La$ is a discretization of the Laplacian from 
 (\ref{eq:lap}) using standard 
finite differences.  We set the control function bounds to $a=-30$, $b=30$
and define the desired state via
$$
y_d = \left\{\begin{array}{l l}
\sin(2\pi x)\sin(2\pi y)\exp(2x)/6 & \textnormal{ in } 2D\\
\sin(2\pi x)\sin(2\pi y)\sin(2\pi z)\exp(2x)/6 & \textnormal{ in } 3D .
\end{array} \right .
$$
In Figure~\ref{fig:dataPoi} the control $u$ for two values of $\beta$ is displayed.

\begin{figure}%
\centering
\subfloat[$\beta = 10^{-3}$]
{
\includegraphics[width=.5\textwidth]{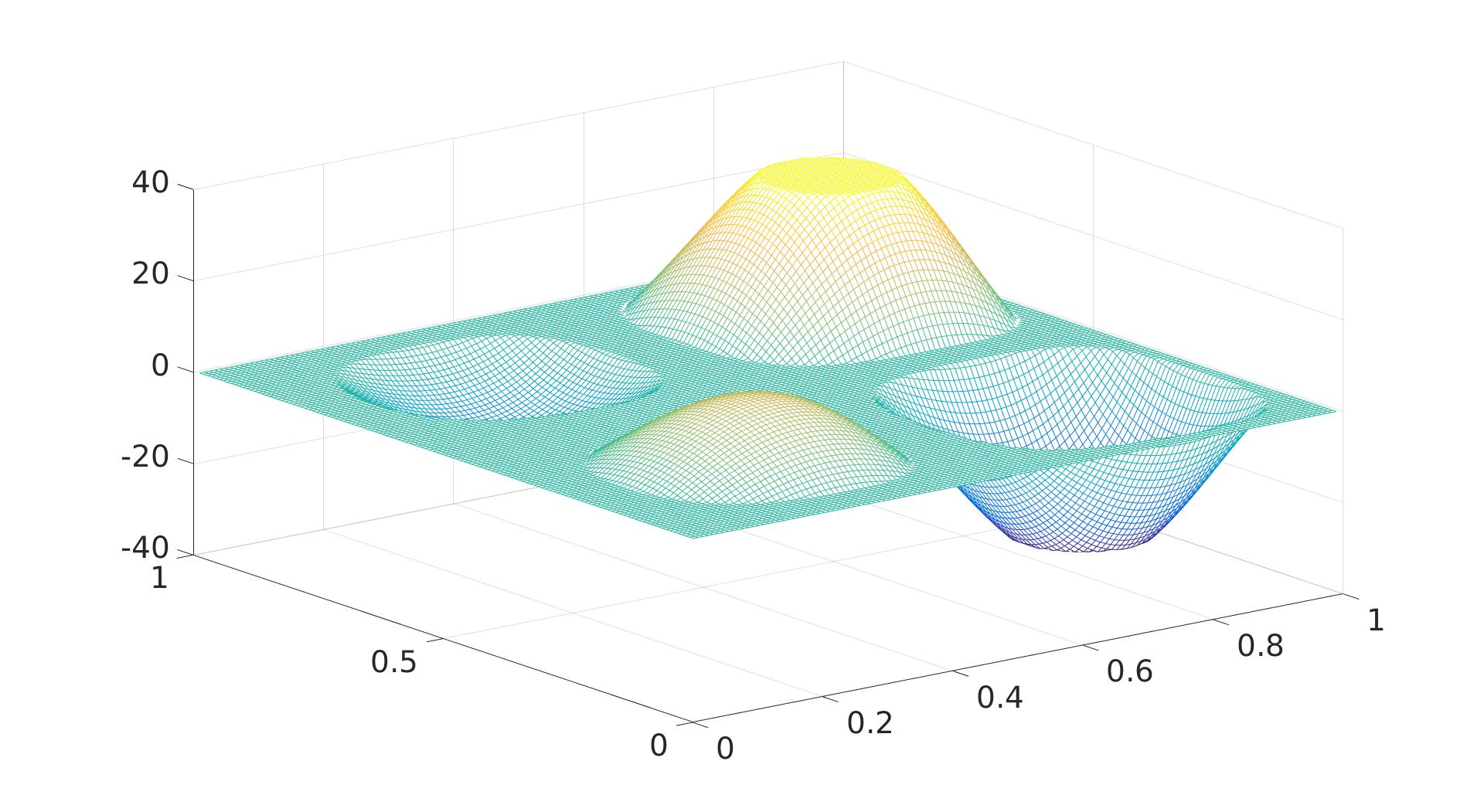}
}
\subfloat[$\beta = 10^{-4}$]{
\includegraphics[width=.5\textwidth]{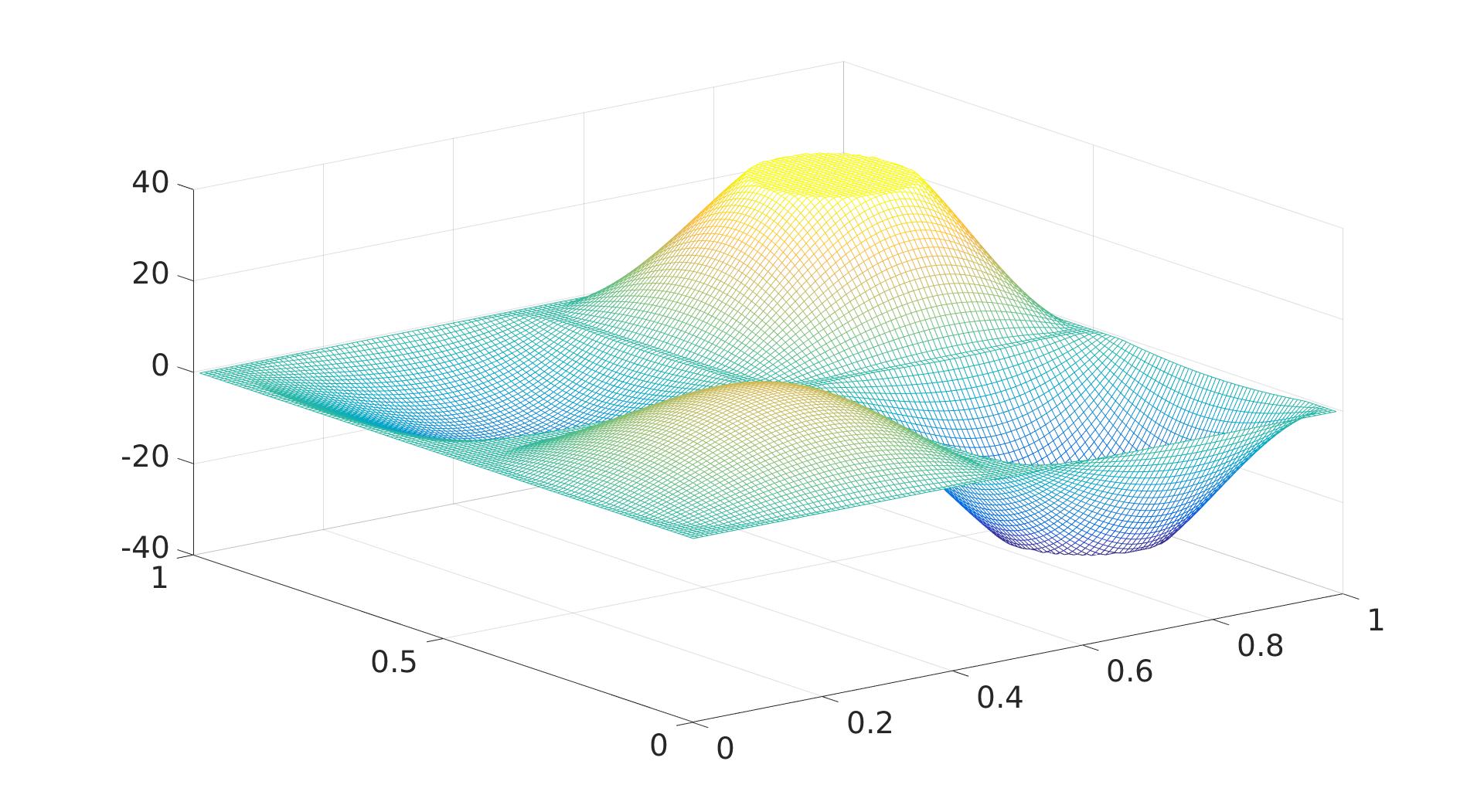}
}
\caption{Control for two different values of the parameter $\beta$. \label{fig:dataPoi}}%
\end{figure} 

As we discretize this problem with finite differences we obtain $M = \bar M = I$, the identity matrix. 
The dimension $n$ of control/state vectors is given by $n = (2^{\ell})^d$ where $\ell$ is the level
of discretization, $d=2,3$. Then, the mesh size is $h = 1/(2^{\ell}+1)$.
In the 2D case, we tested $\ell=7,8,9$ resulting in $n\in\left\lbrace16384, 65536, 262144\right\rbrace$; 
in the 3D case we set $\ell=4,5,6$ yielding $n\in\left\lbrace4096, 32768, 262144\right\rbrace$ degrees of freedom.

Figure \ref{varybeta} shows that for this problem large values of $\beta$ yield very sparse 
control $u$ and a large number of nonlinear iterations to find the solution.

\begin{figure}
\includegraphics[width= \textwidth]{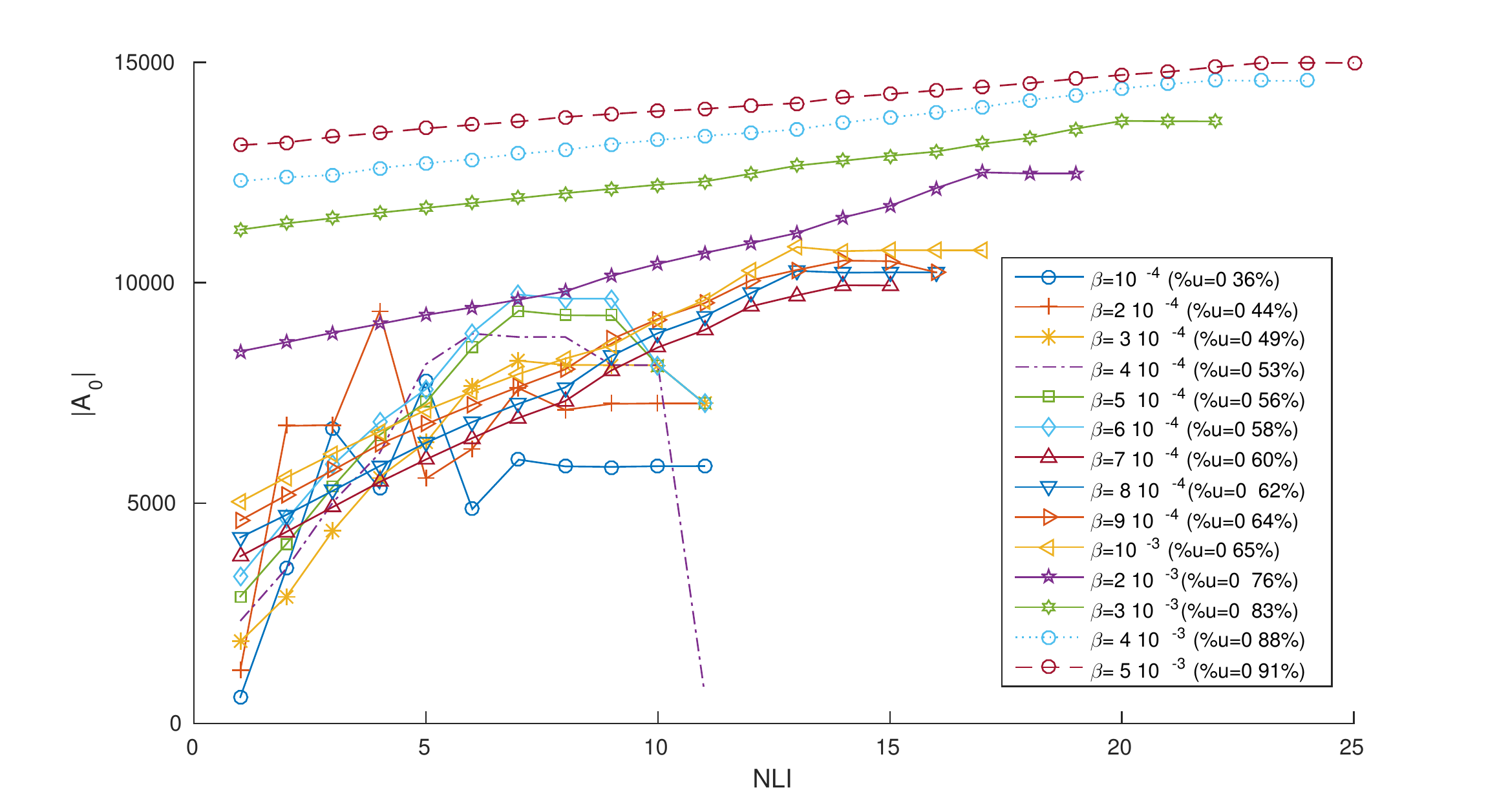}
 \caption{Number of zero components of the control $u$ versus nonlinear iterations, varying $\beta$
(parameters: $\ell = 7, \alpha= 10^{-6}$).}\label{varybeta}
\end{figure}
Table \ref{tab::resultspoisson1} shows the results for two setups applied in the case of the Poisson control problem. The results indicate that the indefinite preconditioner within {\sc gmres} performs 
remarkably better in terms of iteration numbers as well as with 
respect to computing time consumed.
Note that the number of (linear) {\sc gmres} iterations is mesh independent in both formulations,
and very mildly dependent on $\alpha$.
Moreover, this number is very low for all values of $\alpha$, implying that {\sc gmres} requires
moderate memory requirements, which can be estimated a priori by using information obtained
by running the method on a very coarse grid.
On this problem the iterative solution of the 2$\times$2 system is not significantly cheaper
than that of the 4$\times$4 one. Nonetheless, this slight improvement accumulates during
the nonlinear iterations, showing up more significantly in the total CPU time
({\sc tcpu}), for both preconditioned solvers.

\begin{table}[htb!]
\footnotesize
\begin{center}
% Table generated by Excel2LaTeX from sheet 'Test 1 - Poisson FD'
\begin{tabular}{lllrrr|rrr|rrr}
%\begin{tabular}{llrrrrrrrrrrrrr}
\toprule
           &            &            &      \multicolumn{ 3}{c}{\ascp}    &     \multicolumn{ 3}{|c|}{\asbd} &&&            \\
\midrule
\multicolumn{ 1}{l}{4x4} &        $\ell$ &   $\mathrm{log}_{10}\alpha$ &  {\sc li}  &  {\sc cpu} & {\sc tcpu} &      {\sc li}  &  {\sc cpu} & {\sc tcpu} &      \%u=0 &  {\sc nli} &   {\sc bt} \\
\midrule
& \multicolumn{ 1}{l}{7} &   -2 &      11.0 &       2.0 &       4.1 &          25.0 &       6.0 &      12.1 &       3.5 &          2 &          0 \\

\multicolumn{ 1}{l}{} & \multicolumn{ 1}{l}{} &   -4 &      16.0 &       2.9 &      14.5 &           34.8 &       7.8 &      39.2 &       8.6 &          5 &          0 \\

\multicolumn{ 1}{l}{} & \multicolumn{ 1}{l}{} &   -6 &      27.1 &       6.2 &      69.1 &          61.2 &      14.0 &     154.9 &      35.6 &         11 &          9 \\
\multicolumn{ 1}{l}{} & \multicolumn{ 1}{l}{8} &   -2 &      12.0 &       7.2 &      14.4 &           27.0 &      19.2 &      38.5 &       4.3 &          2 &          0 \\

\multicolumn{ 1}{l}{} & \multicolumn{ 1}{l}{} &   -4 &      16.8 &       8.5 &      42.6 &           36.2 &      25.3&     126.9 &       9.4 &          5 &          0 \\

\multicolumn{ 1}{l}{} & \multicolumn{ 1}{l}{} &   -6 &      27.6 &      16.1 &     161.9 &          65.0 &      44.9 &     449.5 &      36.2 &         10 &          7 \\
\multicolumn{ 1}{l}{} & \multicolumn{ 1}{l}{9} &   -2 &      12.0 &      30.9 &      61.8 &          27.0 &     103.0 &     206.0 &       4.7&          2 &          0 \\

\multicolumn{ 1}{l}{} & \multicolumn{ 1}{l}{} &   -4 &      17.2 &      28.6 &     143.2 &          37.0 &      93.9 &     469.6 &       9.7 &          5 &          0 \\

\multicolumn{ 1}{l}{} & \multicolumn{ 1}{l}{} &   -6 &      28.8 &      60.2&     782.5 &          68.7 &     190.8 &    2481.5 &      36.4 &         13 &         11 \\
\multicolumn{ 1}{l}{2x2} &       & &  &&&&&&&&\\
\midrule
\multicolumn{ 1}{l}{} & \multicolumn{ 1}{l}{7} &   -2 &      11.0 &       2.0 &       4.0 &           24.0 &       4.8 &       9.6 &       3.5 &          2 &          0 \\

\multicolumn{ 1}{l}{} & \multicolumn{ 1}{l}{} &   -4 &      16.0 &       2.8 &      14.1 &           35.6 &       6.2 &      31.3 &       8.6 &          5 &          0 \\

\multicolumn{ 1}{l}{} & \multicolumn{ 1}{l}{} &   -6 &      26.7 &       5.3 &      58.9 &           65.2 &      11.2 &     124.0 &      35.6 &         11 &          9 \\

\multicolumn{ 1}{l}{} & \multicolumn{ 1}{l}{8} &   -2 &      11.5 &       4.9 &       9.9 &           25.0 &      15.6 &      31.2 &       4.3 &          2 &          0 \\

\multicolumn{ 1}{l}{} & \multicolumn{ 1}{l}{} &   -4 &      16.4 &       6.6 &      33.3 &            37.0 &      20.1 &     100.5 &       9.4 &          5 &          0 \\

\multicolumn{ 1}{l}{} & \multicolumn{ 1}{l}{} &   -6 &      27.5 &      12.1 &     121.7 &           68.3 &      36.2 &     362.6 &      36.2 &         10 &          7 \\

\multicolumn{ 1}{l}{} & \multicolumn{ 1}{l}{9} &   -2 &      12.0 &      26.0 &      52.0 &       25.5 &      95.2 &     190.5 &       4.7 &          2 &          0 \\

\multicolumn{ 1}{l}{} & \multicolumn{ 1}{l}{} &   -4 &      17.0 &      25.9 &     129.8 &          38.6 &      74.4 &     372.4 &       9.7 &          5 &          0 \\

\multicolumn{ 1}{l}{} & \multicolumn{ 1}{l}{} &   -6 &      28.5 &      50.6 &     657.7 &           71.8 &     154.4 &    2007.6 &      36.4 &         13 &         11 \\
\bottomrule
\end{tabular}  
\end{center}
\caption{Results for the Poisson problem with $\beta = 10^{-4}$ in two dimensions.
\label{tab::resultspoisson1}}
\end{table}

% \textbf{Do we want to still show the c results?}
%  \begin{table}[htb!]
%  \footnotesize
% \begin{center}
% \begin{tabular}{|r|r|r|r|}
% 		\hline
%      $c_{fact}$ & {\sc li} ({\sc nli}) &  {\sc cpu} & {\sc tcpu} \\
% 		\hline
% 
%      0.001 &    15.8(*) &        4.9 &      487.8 \\
% 
%        0.1 &    15.9(*) &        4.7 &      465.0 \\
% 
%        0.2 &    15.9(*) &        4.9 &      486.3 \\
% 
%        0.5 &    16.2(5) &        4.1 &       20.9 \\
% 
%          1 &    16.0(5) &        4.1 &       20.8 \\
% 
%          2 &    16.2(7) &        4.1 &       29.2 \\
% 
%          5 &   16.6(13) &        4.5 &       59.1 \\
% 
%         10 &   16.8(18) &        4.3 &       79.0 \\
% 
%        100 &   17.1(74) &        4.5 &      337.0 \\
% \hline
% \end{tabular}  
% \end{center}\caption{Test 1: $c= c_{fact}/\alpha$, $\alpha = 10^{-4}, \beta = 10^{-4}$, 2D, $l =7$, {\sc gmres}.}
% \end{table}

The results in Table \ref{tab::resultspoisson2} show the comparison of our proposed preconditioners for a three-dimensional 
Poisson problem. The previous results are all confirmed.
In particular,  it can again be seen that the indefinite preconditioner performs very well in comparison to 
the block-diagonal one. The timings indicate that the $2\times 2$ formulation is faster than the $4\times 4$ formulation.
The matrices stemming from the three-dimensional model are denser, therefore the gain in using the $2\times 2$ formulation
is higher. 

 \begin{table}[htb!]
 \footnotesize
\begin{center}
% Table generated by Excel2LaTeX from sheet 'Test 1 - Poisson FD 3D'
\begin{tabular}{lllrrr|rrr|rrr}
%\begin{tabular}{llrrrrrrrrrrrrr}
\toprule
           &            &            &      \multicolumn{ 3}{c}{\ascp}    &     \multicolumn{ 3}{|c|}{\asbd} &&&            \\
\midrule
\multicolumn{ 1}{l}{4x4} &        $\ell$ &   $\mathrm{log}_{10}\alpha$ &  {\sc li}  &  {\sc cpu} & {\sc tcpu} &      {\sc li}  &  {\sc cpu} & {\sc tcpu} &      \%u=0 &  {\sc nli} &   {\sc bt} \\
\midrule
\multicolumn{ 1}{l}{} & \multicolumn{ 1}{l}{4} &   -2 &      10.0 &       0.5 &       1.0 &                    21.0 &       1.9 &       3.8 &       7.4 &          2 &          0 \\

\multicolumn{ 1}{l}{} & \multicolumn{ 1}{l}{} &   -4 &      15.3 &       0.5 &       2.2 &                31.5 &       2.3 &       9.1 &       7.6 &          4 &          0 \\

\multicolumn{ 1}{l}{} & \multicolumn{ 1}{l}{} &   -6 &      24.0 &       0.9 &       7.7 &             51.1 &       3.8 &      30.6 &      38.6 &          8 &          5 \\

\multicolumn{ 1}{l}{} & \multicolumn{ 1}{l}{5} &   -2 &      10.0 &       2.5 &       5.1 &           23.0 &      11.7 &      23.5 &       8.0 &          2 &          0 \\

\multicolumn{ 1}{l}{} & \multicolumn{ 1}{l}{} &   -4 &      16.0 &       4.2 &      17.1 &      33.0 &      16.5 &      66.0 &      13.7 &          4 &          0 \\

\multicolumn{ 1}{l}{} & \multicolumn{ 1}{l}{} &   -6 &      29.7 &       7.6 &      60.7 &          62.3 &      32.6 &     261.0 &      44.4 &          8 &          2 \\

\multicolumn{ 1}{l}{} & \multicolumn{ 1}{l}{6} &   -2 &      11.0 &      19.1 &      38.2 &          23.00 &      82.21 &     164.41 &      11.8 &          2 &          0 \\

\multicolumn{ 1}{l}{} & \multicolumn{ 1}{l}{} &   -4 &      16.0 &      27.5 &     110.1 &    33.7     &      121.5 &     486.2 &      17.3 &          4 &          0 \\

\multicolumn{ 1}{l}{} & \multicolumn{ 1}{l}{} &   -6 &      31.7 &      71.3 &     570.5 &      69.1    &      261.4 &    2091.9 &      46.9 &          8 &          2 \\
\multicolumn{ 1}{l}{2x2} &       & &  &&&&&&&&\\
\midrule
\multicolumn{ 1}{l}{} & \multicolumn{ 1}{l}{4} &   -2 &      10.0 &       0.3 &       0.7 &         20.0 &       1.2 &       2.4 &       7.4 &          2 &          0 \\

\multicolumn{ 1}{l}{} & \multicolumn{ 1}{l}{} &   -4 &      14.7 &       0.5 &       2.1 &          32.7 &       1.8 &       7.4 &       7.6 &          4 &          0 \\

\multicolumn{ 1}{l}{} & \multicolumn{ 1}{l}{} &   -6 &      23.2 &       0.8 &       6.7 &         53.5 &       2.8 &      22.7 &      38.6 &          8 &          5 \\

\multicolumn{ 1}{l}{} & \multicolumn{ 1}{l}{5} &   -2 &      10.0 &       2.3 &       4.7 &           20.5 &       8.5 &      17.1 &       8.0 &          2 &          0 \\

\multicolumn{ 1}{l}{} & \multicolumn{ 1}{l}{} &   -4 &      15.5 &       3.1 &      12.7 &           33.0 &      12.9 &      51.9 &      13.7 &          4 &          0 \\
\multicolumn{ 1}{l}{} & \multicolumn{ 1}{l}{} &   -6 &      29.0 &       6.0 &      48.4 &           67.2 &      26.2 &     210.1 &      44.4 &          8 &          2 \\

\multicolumn{ 1}{l}{} & \multicolumn{ 1}{l}{6} &   -2 &      10.5 &      17.3 &      34.7 &            22.0 &      61.4 &     122.9 &      11.8 &          2 &          0 \\

\multicolumn{ 1}{l}{} & \multicolumn{ 1}{l}{} &   -4 &      16.0 &      26.1 &     104.5 &           34.7 &      92.8 &     371.5 &      17.3 &          4 &          0 \\
\multicolumn{ 1}{l}{} & \multicolumn{ 1}{l}{} &   -6 &      31.2 &      59.4 &     475.8 &          74.5 &     210.0 &    1680.3 &      46.9 &          8 &          2 \\

\bottomrule
\end{tabular}  
\end{center}\caption{Results for the Poisson problem with $\beta = 10^{-4}$ in three dimensions.
 \label{tab::resultspoisson2}}
\end{table}

\subsection{Convection diffusion problem} We consider the convection diffusion equation given by
$$\rm - \eps \Delta y + w \cdot \nabla y = u $$
with the wind $\rm w$ defined via $\rm w = (2y(1-x^2), -2x(1-y^2))$, and  the bounds $\rm a=-20$ and $\rm b = 20$. We now use finite elements in combination with the streamline upwind Galerkin (SUPG) approach for $\bar M$ and $K$ ($n \times n$), 
which are now nonsymmetric matrices. For our purposes we tested three different mesh-sizes. Namely,  $n\in\left\lbrace4225, 16641,  66049\right\rbrace$ and we also vary the influence of the diffusion by varying $\eps$, 
i.e., $\eps\in\left\lbrace 1,0.5,0.1\right\rbrace$.
{Figure~\ref{fig:dataCD} shows the control $u$ for two values of $\beta$.}
\begin{figure}%
\centering
\subfloat[$\beta = 5~10^{-2}$]{
\includegraphics[width=.4\textwidth]{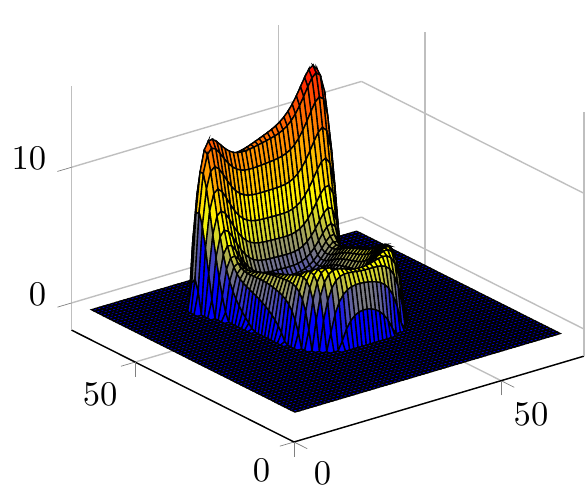}
}
\subfloat[$\beta = 10^{-2}$]{
\includegraphics[width=.4\textwidth]{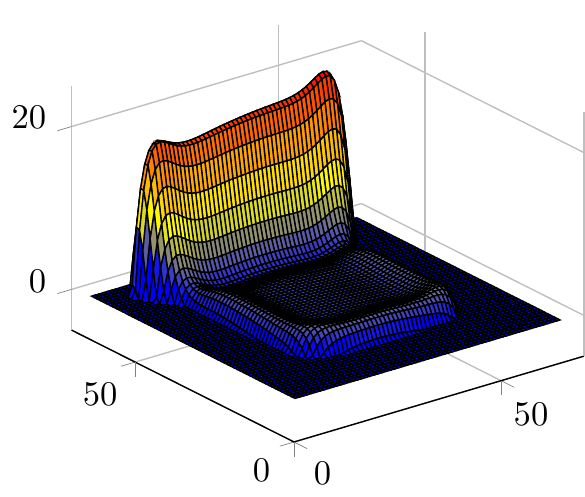}
}
\caption{Control for two different values of the parameter $\beta$. \label{fig:dataCD}}%
\end{figure} 

%The discretized convection-diffusion operator is more complex than the discrete Laplacian. 
%Hence, we need to carefully approximate the components of $\widehat {\mathbb S},$ such as $(\sqrt{\alpha}L + \bar M\Pi_{\I})$, using an algebraic multigrid technique. Again our choice is the {\sc mi20} 
%routine and we use the function with a Gauss-Seidel coarse solver, $5$ steps of pre-smoothing,  and $3$ V-cycles.
The results in Table \ref{tab::cd4} indicate that once more the indefinite preconditioner outperforms the block-diagonal one.
% \input{tables_CD.tex}
% \begin{table}[htb!]
% \footnotesize
% \begin{center}
% \begin{tabular}{llllllll}
% \hline
%  \multicolumn{ 4}{c}{SetData = 1 $\eps = 1$} & \multicolumn{ 4}{c}{SetData = 3 $\eps = 1/2$} \\
% \hline
% $c_{fact}$ & {\sc li} ({\sc nli}) &  {\sc cpu} & {\sc tcpu} & $c_{fact}$ & {\sc li} ({\sc nli}) &  {\sc cpu} & {\sc tcpu} \\
% \hline
%      0.001 &   22.6(67) &       2.13 &     142.61 &      0.001 &   26.2(66) &       2.70 &     177.87 \\
% 
%        0.1 &   20.9(21) &       2.21 &      46.45 &        0.1 &   22.7(26) &       1.93 &      50.22 \\
% 
%        0.2 &   19.9(10) &       1.83 &      18.32 &        0.2 &   21.2(15) &       1.82 &      27.24 \\
% 
%        0.5 &   19.8(12) &       1.66 &      19.92 &        0.5 &   21.8(12) &       1.84 &      22.02 \\
% 
%          1 &   19.3(10) &       1.71 &      17.14 &          1 &   22.0(14) &       1.99 &      27.83 \\
% 
%          2 &   19.8(16) &       1.88 &      30.00 &          2 &   22.7(17) &       1.97 &      33.43 \\
% 
%          5 &   19.7(30) &       1.70 &      50.89 &          5 &   21.6(21) &       2.00 &      42.06 \\
% 
%         10 &   19.8(32) &       1.76 &      56.26 &         10 &   22.4(34) &       2.13 &      72.52 \\
% 
%        100 &   19.6(84) &       1.80 &     151.28 &        100 &    20.0(*) &       2.10 &     207.79 \\
% \hline
% \end{tabular}  
% 
% \end{center}\caption{Convection Diffusion: $c= c_{fact}/\alpha$, $\alpha = 10^{-4}, \beta = 10^{-2}$, $n = 4225 $, \ascp (default MI20 parameters).}
% \label{CD2}
% \end{table}

 \begin{table}\footnotesize
\begin{center}
%\begin{tabular}{llrrrrrrrrrrrr}
\begin{tabular}{llrrr|rrr|rrr}
\toprule
     &      &      \multicolumn{ 3}{c}{\ascp}    &     \multicolumn{ 3}{|c|}{\asbd} &          \\
     \midrule
  & $\mathrm{log}_{10}\alpha$ &  {\sc li}  &  {\sc cpu} & {\sc tcpu} &        {\sc li}  &  {\sc cpu} & {\sc tcpu} &      \%u=0 &  {\sc nli} &   {\sc bt} \\
  %& $\mathrm{log}_{10}\alpha$ &  {\sc li}  &  {\sc cpu} & {\sc tcpu} &      \%u=0 &  {\sc nli} &   {\sc bt} &  {\sc li}  &  {\sc cpu} & {\sc tcpu} &      \%u=0 &  {\sc nli} &   {\sc bt} \\
\midrule
\multicolumn{ 1}{l}{$n = 4225$} &   -1 &       10.0 &        1.2 &        2.4 &          22.0 &        1.7 &        3.5 &       12.0 &          2 &          0 \\

\multicolumn{ 1}{l}{$\eps =1$} &   -2 &       13.0 &        1.2 &        6.0 &        29.6 &        2.0 &       10.0 &       23.2 &          5 &          0 \\

\multicolumn{ 1}{l}{} &   -3 &       15.2 &        1.3 &        9.3 &             35.5 &        2.4 &       16.7 &       43.0 &          7 &          4 \\

\multicolumn{ 1}{l}{} &   -4 &       19.0 &        1.6 &       16.3 &            48.1 &        3.1 &       31.3 &       55.7 &         10 &         11 \\

\multicolumn{ 1}{l}{} &   -5 &       22.4 &        2.1 &       90.4 &             61.1 &        3.9 &      171.0 &       62.5 &         43 &        183 \\
\midrule
\multicolumn{ 1}{l}{$n = 4225$} &   -1 &       13.7 &        1.3 &        5.4 &          33.5 &        2.4 &        9.7 &       14.3 &          4 &          2 \\

\multicolumn{ 1}{l}{$ \eps =1/2$} &   -2 &       17.9 &        1.5 &       20.5 &         46.7 &        3.1 &       41.4 &       27.5 &         13 &         17 \\

\multicolumn{ 1}{l}{} &   -3 &       23.0 &        2.1 &       40.6 &            64.5 &        4.3 &       82.0 &       37.9 &         19 &         46 \\

\multicolumn{ 1}{l}{} &   -4 &       27.7 &        2.5 &       57.5 &           77.2 &        5.0 &      116.5 &       45.3 &         23 &         45 \\

\multicolumn{ 1}{l}{} &   -5 &       35.1 &        3.6 &      141.8 &           124.5 &        8.0 &      314.9 &       46.9 &         39 &        127 \\
\midrule
\multicolumn{ 1}{l}{$n = 4225$} &   -1 &       11.0 &        1.2 &        2.4 &          26.0 &        1.9 &        3.8 &       10.8 &          2 &          0 \\

\multicolumn{ 1}{l}{$\eps =1/10$} &   -2 &       14.3 &        1.2 &        7.2 &         33.8 &        2.2 &       13.3 &       24.9 &          6 &          3 \\

\multicolumn{ 1}{l}{} &   -3 &       17.8 &        1.5 &       13.5 &           44.6 &        2.8 &       25.9 &       41.1 &          9 &          8 \\

\multicolumn{ 1}{l}{} &   -4 &       22.0 &        2.0 &       28.2 &            57.7 &        3.7 &       51.7 &       51.3 &         14 &         29 \\

\multicolumn{ 1}{l}{} &   -5 &       24.2 &        2.3 &       85.1 &            69.5 &        4.4 &      158.9 &       58.8 &         36 &        156 \\
\midrule
\multicolumn{ 1}{l}{$n = 16641$} &   -1 &       13.7 &        5.0 &       20.2 &           34.2 &        9.1 &       36.4 &       12.8 &          4 &          2 \\

\multicolumn{ 1}{l}{$\eps = 1/10$} &   -2 &       17.7 &        6.4 &       77.6 &         47.1 &       12.2 &      146.8 &       25.8 &         12 &         20 \\

\multicolumn{ 1}{l}{} &   -3 &       23.2 &        8.2 &      123.6 &           62.5 &       17.1 &      257.7 &       37.2 &         15 &         25 \\

\multicolumn{ 1}{l}{} &   -4 &       29.6 &       11.2 &      224.5 &            85.3 &       21.6 &      432.9 &       45.4 &         20 &         42 \\

\multicolumn{ 1}{l}{} &   -5 &       31.5 &       15.4 &      864.3 &         100.9 &       25.9 &     1452.2 &       48.9 &         56 &        369 \\
\midrule
\multicolumn{ 1}{l}{$n = 66049$} &   -1 &       13.7 &       17.7 &       70.8 &         34.2 &       33.7 &      134.8 &       12.1 &          4 &          2 \\

\multicolumn{ 1}{l}{$\eps = 1/10$} &   -2 &       18.7 &       22.7 &      295.0 &           50.6 &       48.5 &      630.5 &       25.3 &         13 &         18 \\

\multicolumn{ 1}{l}{} &   -3 &       23.4 &       27.0 &      406.0 &             63.4 &       55.2 &      828.0 &       36.6 &         15 &         24 \\

\multicolumn{ 1}{l}{} &   -4 &       32.4 &       39.7 &     1231.8 &           103.3 &       87.8 &     2722.4 &       45.2 &         31 &         84 \\

\multicolumn{ 1}{l}{} &   -5 &       30.6 &       42.1 &     4004.2 &            98.6 &       84.3 &     8010.8 &       48.9 &         95 &        783 \\
\bottomrule
\end{tabular}  
\end{center}
\caption{Convection-Diffusion problem: comparison between \ascp and \asbd using the $2\times 2$ formulation for
various settings
(parameters: $\beta= 10^{-2}$).}
\label{CD3}
\end{table}

A comparison of both formulations with respect to changes in the mesh-size is shown in Table \ref{tab::cd4}. We see that the performance of the iterative solver for the linear system is robust with respect to changes in the mesh-size but also with respect to the two formulations presented.
We also remark that as $\alpha$ gets smaller, the number of back tracking ({\sc bt}) iterations increases showing that the
problem is much harder to solve, and the line-search strategy regularizes the Newton's model
by damping the step. Once again, the reduced formulation is more competitive than the original one.

\begin{table}[htb!]
\footnotesize
\begin{center}
\begin{tabular}{llrrrrrr}
\toprule
     &      &      \multicolumn{ 6}{c}{\ascp}              \\
     \midrule
$n = 16661$ &   $\mathrm{log}_{10}\alpha$ &  {\sc li}  &  {\sc cpu} & {\sc tcpu} &      \%u=0 &  {\sc nli} &   {\sc bt} \\
\midrule
\multicolumn{ 1}{l}{$4 \times 4$} &   -1 &      13.75 &       5.38 &      21.52 &      12.85 &          4 &          2 \\

\multicolumn{ 1}{l}{} &   -2 &      17.75 &       6.68 &      80.18 &      25.83 &         12 &         20 \\

\multicolumn{ 1}{l}{} &   -3 &      23.27 &       9.16 &     137.35 &      37.29 &         15 &         25 \\

\multicolumn{ 1}{l}{} &   -4 &      30.25 &      13.07 &     261.45 &      45.45 &         20 &         42 \\

\multicolumn{ 1}{l}{} &   -5 &      33.09 &      17.33 &     970.60 &      48.98 &         56 &        369  \\
\midrule
\multicolumn{ 1}{l}{$2\times 2$} &   -1 &       13.7 &        5.0 &       20.2 &       12.8 &          4 &          2 \\

\multicolumn{ 1}{l}{} &   -2 &       17.7 &        6.4 &       77.6 &       25.8 &         12 &         20 \\

\multicolumn{ 1}{l}{} &   -3 &       23.2 &        8.2 &      123.6 &       37.2 &         15 &         25 \\

\multicolumn{ 1}{l}{} &   -4 &       29.6 &       11.2 &      224.5 &       45.4 &         20 &         42 \\

\multicolumn{ 1}{l}{} &   -5 &       31.5 &       15.4 &      864.3 &       48.9 &         56 &        369 \\
% \bottomrule
% \end{tabular} 
% 
% \begin{tabular}{llllllll}
\midrule
$n= 66049$ &   $\mathrm{log}_{10}\alpha$ &  {\sc li}  &  {\sc cpu} & {\sc tcpu} &      \%u=0 &  {\sc nli} &   {\sc bt}             \\
\midrule
\multicolumn{ 1}{l}{$4 \times 4$} &   -1 &      13.75 &      18.97 &      75.87 &      12.16 &          4 &          2             \\

\multicolumn{ 1}{l}{} &   -2 &      18.77 &      26.23 &     340.99 &      25.39 &         13 &         18             \\

\multicolumn{ 1}{l}{} &   -3 &      23.47 &      33.50 &     502.43 &      36.64 &         15 &         24             \\

\multicolumn{ 1}{l}{} &   -4 &      32.48 &      48.09 &    1490.65 &      45.27 &         31 &         84           \\

\multicolumn{ 1}{l}{} &   -5 &      31.31 &      53.23 &    5056.79 &      48.98 &         95 &        783             \\
\midrule
\multicolumn{ 1}{l}{$2\times 2$} &   -1 &       13.7 &       17.7 &       70.8 &       12.1 &          4 &          2             \\

\multicolumn{ 1}{l}{} &   -2 &       18.7 &       22.7 &      295.0 &       25.3 &         13 &         18            \\

\multicolumn{ 1}{l}{} &   -3 &       23.4 &       27.0 &      406.0 &       36.6 &         15 &         24             \\

\multicolumn{ 1}{l}{} &   -4 &       32.4 &       39.7 &     1231.8 &       45.2 &         31 &         84             \\

\multicolumn{ 1}{l}{} &   -5 &       30.6 &       42.1 &     4004.2 &       48.9 &         95 &        783           \\
\bottomrule
\end{tabular}  
\end{center}\caption{Convection-Diffusion problem: comparison between original and reduced
 formulations using \ascp (parameters:  $\beta= 10^{-2}, \eps =1/10$).\label{tab::cd4}}
\end{table}

\vskip 0.1in
We conclude this section discussing the inexact implementation of the Newton's method.
Following the results by Eisenstat and Walker \cite{EWchoosing} for smooth equations,
we chose the so-called  adaptive $Choice$ 2 for the forcing term $\eta_k$ in (\ref{res}) in order to achieve the desirable fast local convergence
near a solution and, at the same time, to minimize the oversolving: we set
\begin{equation}\label{ls}
\eta_k=\chi \left(\frac{\|\Theta_{k+1}\|_2}{\|\Theta_k\|_2}\right)^2, \, \,
  k\ge 1,
\end{equation}
with $\chi=0.9$ and safeguard  
$$
\eta_k=\max\{\eta_k,\chi \eta_{k-1}^{2}\},
$$
if $\chi \eta_{k-1}^{2}>0.1$; then,  the additional safeguard
$\eta_k=\min\{\eta_k,\eta_{max}\}$ is used. 
We considered the values $\eta_0=\eta_{max} \in \{10^{-1}, 10^{-2}, 10^{-2}, 10^{-10}\}$
to explore the impact of the linear solver accuracy on the overall Newton's performance.

In Figure \ref{fig:eta}, we plot the overall CPU time and the average number of linear iterations
varying $\eta_0$ for the convection-diffusion problem. {These results 
were obtained using the reduced formulation of the Newton's 
equation with the residual test in (\ref{eqN_2_inex}).
Nevertheless, we remark that %in accordance to the analysis performed in Section \ref{sec::inexact}, 
we obtained similar results using the augmented formulation (\ref{eqN_4_inex}), that is the same number 
of {\sc NLI} and {\sc LI} but clearly different values for {\sc tcpu}.

We note that the gain in CPU time increases for looser accuracy due to
the decrease in the linear iterations (almost constant with size). In particular, on average,
$\eta_0=10^{-1}$ yields a gain of the 65\% of {\sc tcpu} with respect to the ``exact'' choice 
$\eta_0=10^{-10}$ while the gain with $\eta_0=10^{-4}$ is of the 46\%.

}

\begin{figure}%
\centering
\subfloat[{\sc tcpu} versus $n$]{
\includegraphics[width=.45\textwidth]{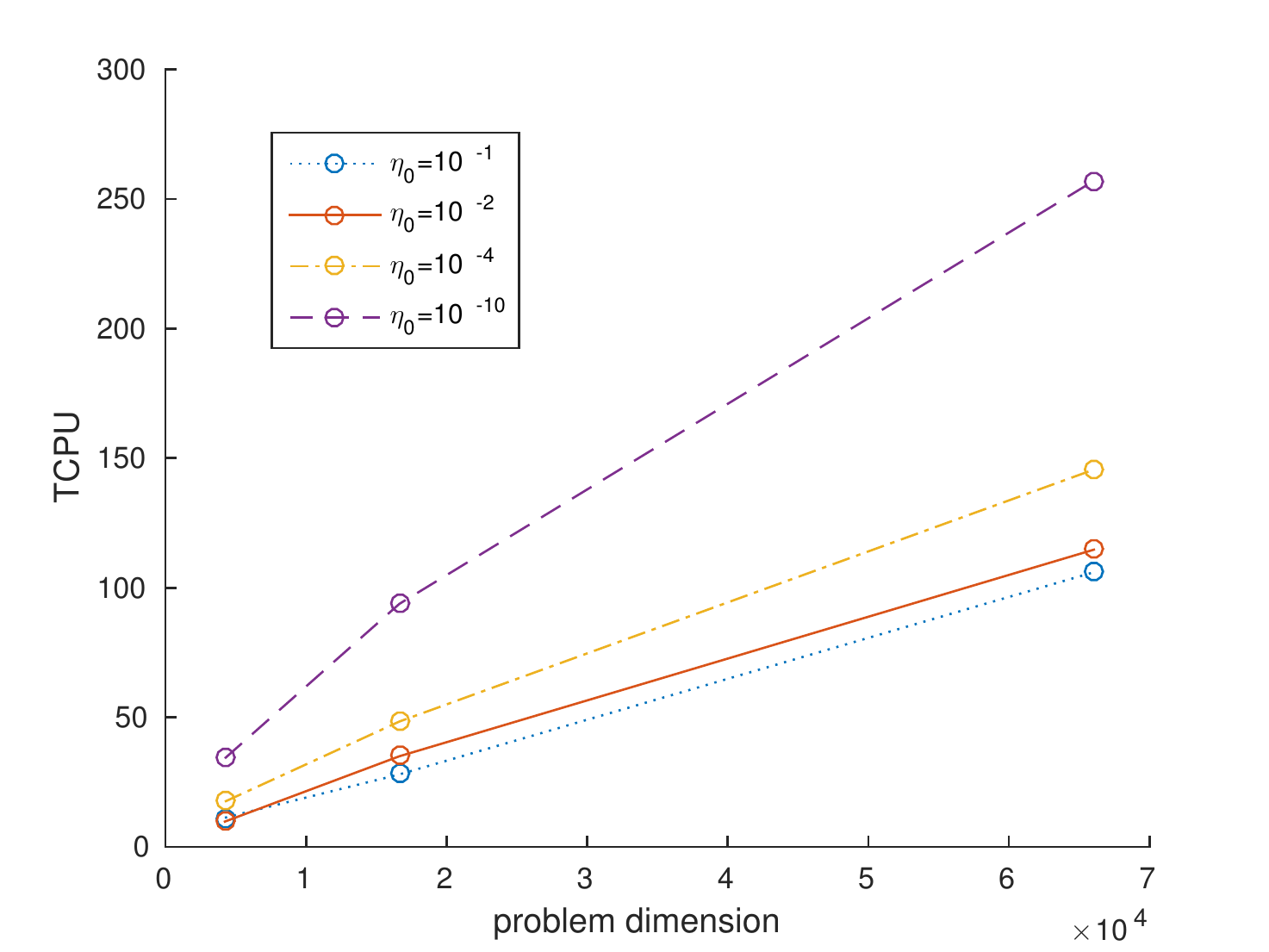}
}
\subfloat[{\sc li} versus $n$]{
\includegraphics[width=.45\textwidth]{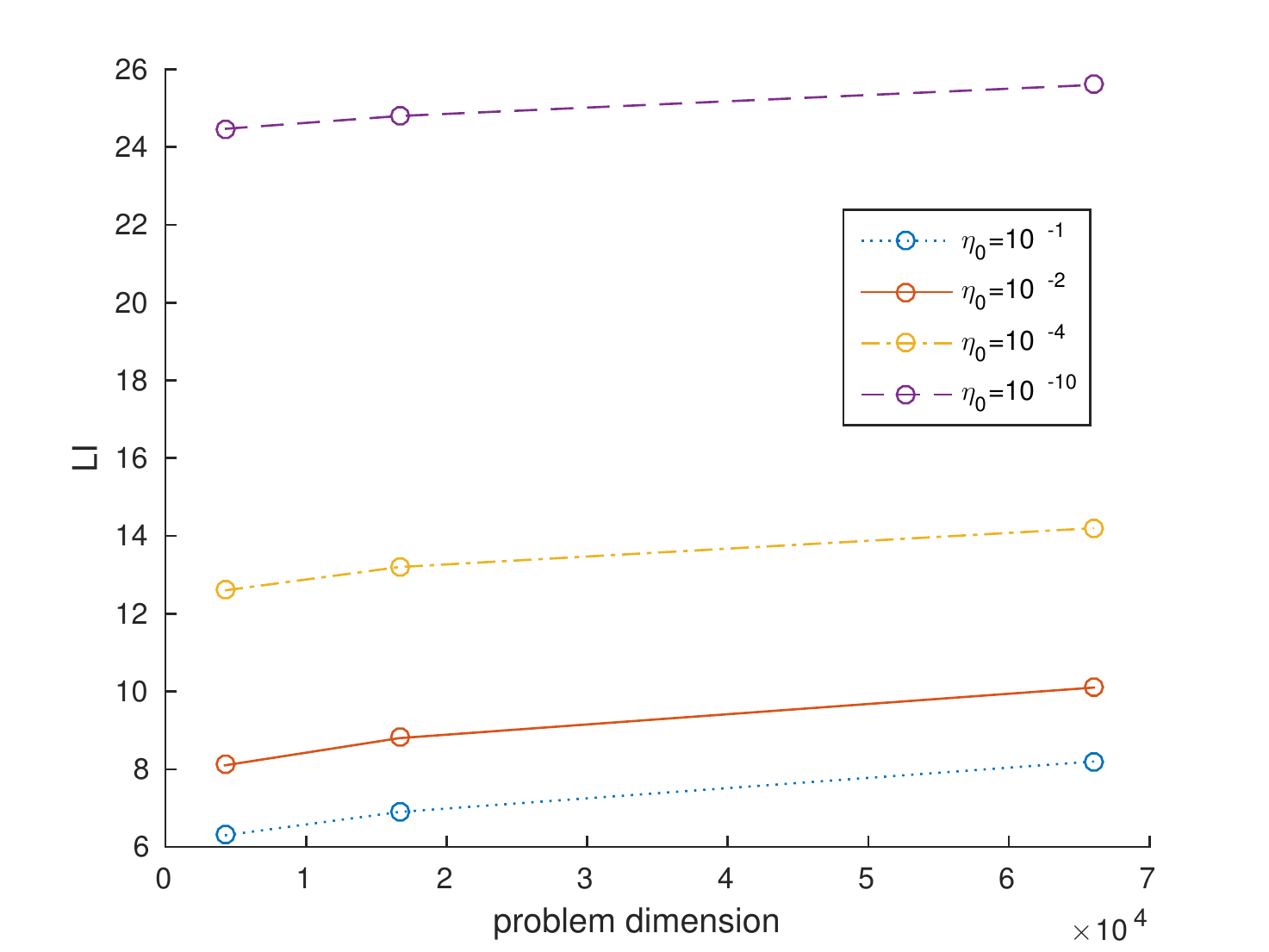}
}
\caption{ Convection-diffusion problem.
Behavior of the inexact Newton's method for different values of the parameter $\eta_0$ (parameters: $\alpha = 10^{-3}, \beta= 10^{-2}, \eps = 1/10$).
\label{fig:eta}}%
\end{figure}

%%%%%%%%%%%%%%%%%%%%%%%%%%%%%%%%%%%%%
\section{Conclusions}
{We have presented a general semismooth Newton's algorithm for
the solution of bound constrained optimal control problems where a sparse control is sought.
On the one side we have analyzed the nonlinear scheme in the framework 
of global convergent inexact semismooth Newton methods;
on the other side, we have enhanced the solution of the linear algebra phase by proposing reduced formulation of the Newton's equation
and preconditioners based on the active-set Schur complement approximations. We have provided a theoretical
support of the proposed techniques and validated the proposals on large scale Poisson and convection-diffusion problems.}

%%%%%%%%%%%%%%%%%%%%%%%%%%%%%%%%%%%%%
\section*{Acknowledgements}
Part of the work of the first two authors was supported by INdAM-GNCS, Italy, under the
2016 Project  \emph{Equazioni e funzioni di matrici con struttura: analisi e algoritmi}.

\bibliographystyle{siam}
\bibliography{data}
\end{document}